\newtheorem{defn}{Definition}[subsection]
\newtheorem{thm}[defn]{Theorem}
\newtheorem{lem}[defn]{Lemma}
\newcommand\A{\mathbb A}
\newcommand\C{\mathbb C}
\newcommand\NN{\mathbb N}
\newcommand\PP{\mathbb P}
\newcommand\R{\mathbb R}
\newcommand\Q{\mathbb Q}
\newcommand\Z{\mathbb Z}
\newcommand\fd{\mathfrak d}
\newcommand\fD{\mathfrak D}
\newcommand\fm{\mathfrak m}
\newcommand\cA{\mathcal A}
\newcommand\cO{\mathcal O}
\newcommand\cP{\mathcal P}
\newcommand\cX{\mathcal X}
\newcommand\fg{\mathfrak g}
\newcommand\Ext{\operatorname{Ext}}
\newcommand\Hom{\operatorname{Hom}}
\newcommand\Rea{\operatorname{Re}\,}
\newcommand\Ima{\operatorname{Im}\,}
\newcommand\ch{\operatorname{ch}}
\newcommand\Pic{\operatorname{Pic}}
\newcommand\Stab{\operatorname{Stab}}
\newcommand\D{\operatorname{D}}
\newcommand\Arg{\operatorname{Arg}}
\newcommand\vir{\mathrm{vir}}
\title[Scattering diagrams, sheaves, and curves]{Scattering diagrams, sheaves, and curves}
\author{Pierrick Bousseau}
\date{}
\begin{document}

\begin{abstract}

We review the recent proof of the N.~Takahashi's conjecture on genus $0$
Gromov-Witten invariants of $(\PP^2,E)$,
where $E$ is a smooth cubic curve in the complex projective plane $\PP^2$.
The main idea is the use of the algebraic notion of scattering diagram as a bridge between the world of Gromov-Witten invariants of $(\PP^2,E)$ and the world of  moduli spaces of coherent sheaves on $\PP^2$. Using this bridge, the N.~Takahashi's conjecture can be translated into a manageable question about moduli spaces of coherent sheaves on $\PP^2$.

This survey is based on a three hours lecture series given as part of the 
Beijing-Zurich moduli workshop in Beijing, 9-12 September 2019.
\end{abstract}

\maketitle

\setcounter{tocdepth}{2}

\tableofcontents

\section{Introduction}
The main theme that we explore in the present review paper is the relationship
established in 
\cite{bousseau2019scattering, bousseau2019takahashi}
between two a priori
distinct geometric topics:
\begin{enumerate}
    \item Relative Gromov--Witten theory of the pair $(\PP^2,E)$, where $E$ is a smooth cubic curve in the complex projective plane $\PP^2$.
    \item Sheaf counting on $\PP^2$.
\end{enumerate}
The connecting link is provided by the algebraic notion of the scattering diagram.
Once the relationship established, it becomes possible to transfer information from one side to the other and to prove non-trivial results. We will only survey some of the results contained in \cite{bousseau2019scattering, bousseau2019takahashi}. 
In particular, we do not discuss higher-genus Gromov--Witten invariants and refined sheaf counting, for which we refer to \cite{bousseau2019scattering, bousseau2019takahashi, bousseau2019hae}.

Our correspondence through scattering diagrams between relative Gromov-Witten theory of $(\PP^2,E)$
and moduli spaces of coherent sheaves is inspired by and similar to  
the correspondence through scattering diagrams between log Gromov-Witten theory of log Calabi-Yau surfaces with maximal boundary and moduli spaces of quiver representations, which is nicely reviewed by Gross and Pandharipande in \cite{MR2662867}, following the work by Gross-Pandharipande-Siebert \cite{MR2667135}.

\subsection{Gromov--Witten theory of $(\PP^2,E)$}
\label{section_gw}
Let $E$ be a smooth cubic curve in the complex projective plane $\PP^2$. For every positive 
integer $d$, a general degree $d$ curve in $\PP^2$ intersects $E$ in $3d$ distinct points. 
Therefore, we expect that asking for degree $d$ curves intersecting $E$ at a single point defines a constraint of codimension $3d-1$ in the space of degree $d$ curves.
On the other hand, the space of rational degree $d$ curves in 
$\PP^2$ is of dimension $3d-1$. Thus, the space of rational degree $d$ curves 
in $\PP^2$ intersecting $E$ at a single point has expected dimension zero,
and the count of such curves should be a well-posed enumerative question.

In fact, this naive dimension counting gives the correct answer: there are really only finitely many rational degree $d$ curves intersecting $E$
at a single point. 

\begin{lem} \label{lem_isolated_curves}
There does not exist positive dimensional families of rational curves in $\PP^2$ meeting 
$E$ at a single point.
\end{lem}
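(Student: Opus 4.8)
The plan is to exploit the fact that $E$ is an anticanonical divisor in order to set up a rigidity argument via a holomorphic volume form. Since $\deg E = 3$, we have $E \in |-K_{\PP^2}|$, so there is a meromorphic $2$-form $\Omega$ on $\PP^2$, unique up to scale, with a simple (logarithmic) pole along $E$ and no zeros; in particular $\Omega$ restricts to a nowhere-vanishing holomorphic symplectic form on the affine surface $U := \PP^2 \setminus E$. I would argue by contradiction: suppose there is a nonconstant family $\{C_t\}_{t \in \Delta}$ of rational curves of degree $d$ meeting $E$ at a single point $p_t$. By B\'ezout each $C_t$ meets $E$ at $p_t$ with total multiplicity $3d$, so $C_t \setminus \{p_t\} \subset U$.

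Next I would pass to normalizations, writing the family as a map $\Phi \colon \PP^1 \times \Delta \to \PP^2$ with $\Phi(\cdot,t)$ a parametrization $\nu_t$ of $C_t$, and let $v_t := \Phi_* \partial_t$ be the deformation vector field, a holomorphic section of $\nu_t^* T_{\PP^2}$. The central object is the meromorphic $1$-form $\alpha_t := \nu_t^*(\iota_{v_t}\Omega)$ on $\PP^1$. Away from the points $q_t^{(i)} \in \nu_t^{-1}(p_t)$ the curve lies in $U$, where $\Omega$ is holomorphic, so $\alpha_t$ is holomorphic there. The key local computation is that, even though $C_t$ is highly tangent to $E$, the form $\alpha_t$ has at worst a \emph{simple} pole at each $q_t^{(i)}$: writing $\Phi^*\Omega = d\log(x\circ\Phi)\wedge d(y\circ\Phi)$ in local coordinates $(x,y)$ with $E = \{x=0\}$, the logarithmic derivative contributes only simple poles, and the residue of $\alpha_t$ at $q_t^{(i)}$ works out to $m_i\,\dot{p}_t$, where $m_i$ is the local contact order and $\dot{p}_t$ is the velocity of $p_t$ along $E$.

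I would then invoke the residue theorem on $\PP^1$: the residues sum to zero, and since all the $q_t^{(i)}$ map to the same point $p_t$ this reads $(\sum_i m_i)\,\dot{p}_t = 3d\,\dot{p}_t = 0$. Hence $p_t$ is constant, every individual residue vanishes, and $\alpha_t$ is a holomorphic $1$-form on $\PP^1$, therefore identically zero. Finally, $\alpha_t \equiv 0$ means $\Omega(v_t, \tau) = 0$ for every vector $\tau$ tangent to $C_t$; since $\Omega$ is nondegenerate on $U$ and every line in a two-dimensional symplectic space is its own orthogonal, $v_t$ is everywhere tangent to $C_t$, i.e.\ the deformation is a mere reparametrization and the subvariety $C_t$ does not move. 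This contradicts the nontriviality of the family. I expect the delicate step to be the pole-order bookkeeping: one must check carefully that the high tangency with $E$ does not raise the order of the pole of $\alpha_t$ (the vanishing of $v_t$ in the direction normal to $E$ exactly compensates the vanishing of $x$), as this simple-pole property is precisely what makes the residue theorem decisive.
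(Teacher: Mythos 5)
Your proposal is correct and takes essentially the same route as the paper: both arguments contract the deformation vector field of the family with the log-symplectic $2$-form $\Omega\in H^0(\PP^2,K_{\PP^2}(E))$, non-degenerate on $\PP^2 - E$ with first-order pole along $E$, to produce a $1$-form on $\PP^1$ with at worst simple poles over the contact point, and then rule out such a form. The only difference is in the final step: the paper packages the family as a dominant rational map $f\colon \PP^1\times B \dashedrightarrow \PP^2$ with $f^{-1}(E)\subset\{\infty\}\times B$ and invokes the non-existence of a non-zero $1$-form on $\PP^1$ whose only singularity is a single first-order pole, whereas your fiberwise residue-theorem bookkeeping reaches the same contradiction and, as a bonus, handles the case of several branches of $C_t$ through $p_t$ (which the paper's reduction to a single section $\{\infty\}\times B$ quietly assumes away) while also showing directly that the contact point cannot move.
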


\begin{proof}
If such a family existed, then one could construct a curve $B$ and a dominant rational map 
$f \colon \PP^1 \times B \dashedrightarrow \PP^2$ such that $f^{-1}(E) \subset \{ \infty \} \times B$. As $E$ is anticanonical in $\PP^2$, there exists a 2-form $\omega$, non-degenerate on $\PP^2-E$ and with first order pole along $E$. 
As we are working in characteristic zero, the pullback 
$f^{*} \omega$ is a non-degenerate  2-form on 
$(\PP^1 -\{\infty \}) \times B$, with first order pole along 
$\infty \times B$. As $B$ is curve, there exists a non-vanishing vector field
on some non-empty open subset $U$ of $B$. Contracting the pullback of this vector field
to $\PP^1 \times U$ with $f^{*} \omega$, we get a non-zero 1-form on each $\PP^1$ fiber above $U$,
with only a first order pole at $\infty$ as singularity. As $\PP^1$ does not admit
non-zero 1-forms with only a first order pole at $\infty$ as singularity, this is a contradiction.
\end{proof}

One can view the pair $(\PP^2,E)$ as a log K3 surface. Lemma \ref{lem_isolated_curves}
is the analogue for $(\PP^2,E)$ of the fact that a K3 surface is not uniruled (in characteristic zero). The proofs are essentially the same in both cases, using the existence of a non-degenerate 2-form on a K3 surface or of a non-degenerate log 2-form on 
$(\PP^2,E)$. Counting rational curves in $\PP^2$ intersecting $E$ at a single point 
is a log version of counting rational curves in K3 surfaces.

Once we know that there are finitely many rational degree $d$ curves in $\PP^2$ intersecting $E$ at a single point, one can count them. However, this naive 
enumerative count has a major defect: it is not deformation invariant. In other words, it depends on the chosen cubic $E$. Rational curves intersecting $E$ at a single point are in general very singular, and one should count them with appropriate multiplicities in order to get a deformation invariant result.

Gromov--Witten theory provides a systematic way to set up deformation invariant enumerative questions. For every positive integer $d$, we have a moduli space 
$\overline{M}_0(\PP^2/E,d)$ of relative stable maps, which is a compactification of the space of degree $d$
maps $f \colon \PP^1 \rightarrow \PP^2$ such that 
$f^{-1}(E)=\{ \infty\}$. The moduli space $\overline{M}_0(\PP^2/E,d)$
is a proper Deligne-Mumford stack and
comes with a zero-dimensional virtual fundamental class
$[\overline{M}_0(\PP^2/E,d)]^{\vir}$. The corresponding Gromov--Witten invariant $N_{0,d}$ is the degree of this class, written as
\[ N_{0,d}^{\PP^2/E} \coloneq \int_{[\overline{M}_0(\PP^2/E,d)]^{\vir}} 1 \,.\]
In general, the virtual fundamental class is a zero-cycle with rational coefficients and so the Gromov--Witten invariant $N_{0,d}^{\PP^2/E}$
is a rational number. By deformation invariance of the virtual fundamental class, 
the relative Gromov--Witten invariants $N_{0,d}^{\PP^2/E}$ are deformation invariant: they do not depend
on the specific choice of the smooth cubic $E$.

The moduli space $\overline{M}_0(\PP^2/E,d)$ is not zero-dimensional in general. Indeed, a relative stable map $f: C \rightarrow (\PP^2,E)$ is in general very far from being an immersion. There are two major issues:
\begin{enumerate}
    \item Multiple cover contributions. Even if $f \colon  \PP^1 \rightarrow \PP^2$ is a nicely immersed degree $d$ rational curve in $\PP^2$, it will contribute to Gromov--Witten theory in every degree $kd$ multiple of $d$ through maps of the form $f \circ h$ where $h \colon \PP^1 \rightarrow \PP^1$ is a degree $k$ map.
    \item Contracted components. There are in fact two possible technical definitions of $\overline{M}_0(\PP^2/E,d)$:
    either using relative stable maps of J. Li \cite{MR1882667}, or using stable log maps of Abramovich-Chen-Gross-Siebert
    \cite{MR3224717, MR3257836, MR3011419}. In relative stable map theory, an element 
    $f \colon C \rightarrow (\PP^2,E)$ of 
    $\overline{M}_0(\PP^2/E,d)$ is a map 
    $f \colon C \rightarrow \PP^2 [n]$ for some $n \in \NN$,
    where $\PP^2 [n]$ is an expansion of $\PP^2$ obtained by $n$-successive degenerations to the normal cone of $E$. 
    In stable log map theory, an element $f \colon C \rightarrow (\PP^2,E)$ is an ordinary map
    $f \colon C \rightarrow \PP^2$
    but promoted at the level of log schemes. These two approaches produce moduli spaces $\overline{M}_0(\PP^2/E,d)$ which are in general slightly different, but define the same Gromov--Witten invariants $N_{0,d}^{\PP^2/E}$.
    
    Whatever the precise approach used, the moduli space $\overline{M}_0(\PP^2/E,d)$ contains in general maps 
    $f \colon C \rightarrow (\PP^2/E)$
    which are more complicated than maps $f \colon \PP^1 \rightarrow \PP^2$.
    Here is an example of what can happen. Let
    $f_1 \colon \PP^1 \rightarrow \PP^2$
    and $f_3 \colon \PP^1 \rightarrow \PP^2$ be immersed curves of degree $d_1$ and $d_3$, and intersecting $E$ at
    a single common point $p$. Let $C$ be a chain $C_1 \cup C_2 \cup C_3$ of three $\PP^1$s. Then, there are maps 
    $f \colon C \rightarrow (\PP^2,E)$
    of degree $d_1+d_3$, coinciding with $f_1$ on $C_1$, with $f_3$ on $C_3$ and mapping $C_2$ inside a ``bubble" in the relative stable map language, or contracting $C_2$
    onto $p$ in the log language.
\end{enumerate}

These two issues, multiple covers and contracted components, are the price to pay in Gromov--Witten theory for deformation invariance. As the Gromov--Witten invariants 
$N_{0,d}^{\PP^2/E}$ are defined through a virtual fundamental class construction on  possibly higher-dimensional stacky moduli spaces, their direct geometric meaning is quite unclear.

In order to understand more precisely when multiple covers and contracted components occur, we need to make a simple observation. Let 
$p_0$ be one of the 9 flex points of $E$ and let $L$ be the tangent line to $E$ at $p_0$. Let $C \subset \PP^2$ be a degree $d$ curve intersecting $E$ at a single point $p$. The curve $C$
is linearly equivalent to 
$dL$ in $\PP^2$. Intersecting this relation with $E$, we get that $3d p$ is linearly equivalent to $3d p_0$ in $E$, i.e.\ we have the relation 
$3d(p-p_0)=0$ in $\Pic^0(E)$. Therefore, the point of contact of $C$ with $E$
necessarily belongs to the set $P_d$ of the $(3d)^2$
points $p$ in $E$ such that $p-p_0$ is $3d$-torsion in $\Pic^0(E)$. The definition of $P_d$ is independent of the choice of the flex point $p_0$. Indeed, if $p_0'$ is another flex point, then $p_0'-p_0$ is $3$-torsion in $\Pic^0(E)$.

It follows that the image of the evaluation morphism $ \overline{M}_0(\PP^2/E,d) \rightarrow \PP^2$ at the contact point with $E$ is contained in $P_d$. It is true even in the presence of contacted components because we are working in genus-0 Gromov--Witten theory and a rational curve cannot dominate $E$.

Therefore, the moduli space $ \overline{M}_0(\PP^2/E,d)$ splits into disjoint components indexed by the points $p \in P_d$:
\[\overline{M}_0(\PP^2/E,d) = \coprod_{p \in P_d} \overline{M}_0(\PP^2/E,d)^p \,.\]
Restricting the virtual fundamental class $[\overline{M}_0(\PP^2/E,d)]^{\vir}$ to the various components, we get 
virtual fundamental classes 
$[\overline{M}_0(\PP^2/E,d)^p]^{\vir}$
and we define
\[ N_{0,d}^{\PP^2/E,p} \coloneq 
\int_{[\overline{M}_0(\PP^2/E,d)^p]^{\vir}} 1 \in \Q\,. \]
For every $p \in P_d$, $N_{0,d}^{\PP^2/E,p}$ is the contribution to $N_{0,d}^{\PP^2/E}$
of the rational degree $d$ curves meeting $E$ at $p$. We have 
\[ N_{0,d}^{\PP^2/E}=\sum_{p \in P_d}
N_{0,d}^{\PP^2/E,p}\,.\]

The splitting according to the point $p$ is useful to understand the geometry underlying the Gromov--Witten invariants because the presence of multiple covers or contracted components depends strongly on the point $p \in P_d$. The invariant
$N_{0,d}^{\PP^2/E,p}$
receives contributions from degree $d'$ dividing $d$ through multiple covers only if 
$p \in P_{d'}$. This motivates the following definition.

\begin{defn}
For every $p \in \bigcup_{d \geqslant 1} P_d$, we denote
by $d(p)$ the smallest $d$ such that $p \in P_d$.
\end{defn}

Let $p \in P_d$. In general, $d(p)$ is a divisor of $d$. If $d(p)=d$, then $p$ is said to be \emph{primitive}. In such case, there are no multiple covers and no contracted components, the moduli space $\overline{M}_0(\PP^2/E,d)^p$ is zero-dimensional and so consists in finitely many (possibly non-reduced)
points. In particular, $N_{0,d}^{\PP^2/E,p}$ is the number of these points (weighted by their length if non-reduced) and so 
$N_{0,d}^{\PP^2/E,p}$ is a nonnegative integer. Thus, if $p$ is primitive, $N_{0,d}^{\PP^2/E,p}$ is as close as possible to the naive enumeration of rational curves in $\PP^2$: each curve is counted with an integer multiplicity.

If $d(p) \neq d$, then $p$ is said to be \emph{non-primitive}. In such case, there are
in general multiple covers and contracted components, $N_{0,d}^{\PP^2/E}$ is only a rational number, and its direct geometric meaning is unclear. The worst case (if $d>1$)
is in some sense $d(p)=1$, i.e.\ 
if $p$ is one of the flex points.
Through multiple covers, the tangent line to a flex point $p$ contributes to the invariants $N_{0,d}^{\PP^2/E,p}$
in every degree $d \geqslant 1$.

\begin{lem} \label{lem_monodromy}
For every positive integer $d$, 
the invariants 
$N_{0,d}^{\PP^2/E,p}$ depends on $p$ only through $d(p)$.
\end{lem}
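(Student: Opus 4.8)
The plan is to prove the statement by combining a deformation (monodromy) argument, which varies the cubic $E$, with a fixed-$E$ symmetry argument using projective automorphisms of the pair $(\PP^2,E)$. Throughout I identify $P_d$ with a torsor under the $3d$-torsion subgroup $E[3d]\cong(\Z/3d\Z)^2$ of $\Pic^0(E)$, via the (non-canonical) choice of the base flex $p_0$, so that $p\mapsto t_p=p-p_0\in E[3d]$. As explained in the text, $d(p)$ is then determined by the order $n_p$ of $t_p$, namely $d(p)=n_p$ if $3\nmid n_p$ and $d(p)=n_p/3$ if $3\mid n_p$.

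First I would set up the monodromy. Let $U\subset\PP^9=\PP(H^0(\PP^2,\cO(3)))$ be the (connected) open locus of smooth cubics, and form the universal family $\mathcal{E}\to U$. The contact points assemble into an \'etale cover $\pi\colon\widetilde U\to U$ with fibre $P_d$, a torsor under the local system $E[3d]$. By deformation invariance of the virtual fundamental class, applied componentwise using that the splitting $\overline{M}_0(\PP^2/E,d)=\coprod_{p}\overline{M}_0(\PP^2/E,d)^p$ is governed by the evaluation map to the finite set $P_d$, which deforms as the cover $\widetilde U$, the function $p\mapsto N_{0,d}^{\PP^2/E,p}$ is locally constant on $\widetilde U$, hence constant on connected components, i.e.\ constant on monodromy orbits in $P_d$. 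The monodromy representation $\pi_1(U)\to\operatorname{Aut}(H_1(E,\Z))=SL_2(\Z)$ factors through the full $\pi_1(M_{1,1})=SL_2(\Z)$ (the map $U\to M_{1,1}$ has connected fibres, being a $PGL_3$-bundle over the embedding data), and its reduction modulo $3d$ surjects onto $SL_2(\Z/3d\Z)$. Since $SL_2(\Z/3d\Z)$ acts transitively on the elements of $E[3d]$ of each fixed order, I conclude that $N_{0,d}^{\PP^2/E,p}$ depends only on the order $n_p$ of $t_p$.

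It remains to identify the two orders $n_p$ that can correspond to a single value of $d(p)$, namely $n=d(p)$ and $n=3\,d(p)$ when $3\nmid d(p)$. For this I use a fixed-$E$ symmetry: for each $\tau\in E[3]$ there is a projective automorphism $\sigma_\tau$ of $\PP^2$ preserving $E$ and inducing translation by $\tau$ on $E$ (these generate the order-$9$ translation part of $\operatorname{Aut}(\PP^2,E)$). The automorphism $\sigma_\tau$ induces an isomorphism $\overline{M}_0(\PP^2/E,d)\cong\overline{M}_0(\PP^2/E,d)$ of pairs, carrying the contact point $p$ to $\sigma_\tau(p)=p+\tau$ and preserving the virtual class, whence $N_{0,d}^{\PP^2/E,p}=N_{0,d}^{\PP^2/E,p+\tau}$. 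When $3\nmid d(p)$ and $t_p$ has order $d(p)$, choosing a nonzero $\tau\in E[3]$ makes $t_p+\tau$ have order $\operatorname{lcm}(d(p),3)=3\,d(p)$; the previous paragraph then identifies the invariant at $t_p$ with that at any point of order $3\,d(p)$. Chaining the two symmetries shows that $N_{0,d}^{\PP^2/E,p}$ is constant on each level set of $d(p)$, which is the claim.

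The main obstacle is the monodromy input in the second paragraph: establishing that the monodromy of the family of smooth plane cubics surjects onto $SL_2(\Z/3d\Z)$ in its action on $E[3d]$, together with making precise the componentwise deformation invariance of the virtual class along the family $\widetilde U$. The first is a standard consequence of the surjectivity of $SL_2(\Z)\to SL_2(\Z/3d\Z)$ and the connectedness of the fibres of $U\to M_{1,1}$, but it must be combined carefully with the torsor (rather than group) structure on $P_d$; the projective-automorphism symmetry of the third paragraph is precisely what lets me sidestep the affine subtlety, since it supplies the translation symmetry by $E[3]$ independently of monodromy.
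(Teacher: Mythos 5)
Your proposal is correct, and its skeleton—deformation invariance of the virtual class along the universal family of smooth cubics, plus a big-monodromy statement—is the same as the paper's, which simply asserts that ``the monodromy of the family of all smooth cubics is big enough to map $p$ on $p'$'' and invokes deformation invariance. Where you genuinely differ is in how you establish transitivity on the level sets of $d(p)$: the paper takes this wholesale as a monodromy fact, while you correctly observe that the monodromy action on $P_d$ is affine rather than linear (the base flex moves, so each monodromy element acts as $t \mapsto At + \tau$ with $A \in SL_2(\Z/3d\Z)$ and $\tau \in E[3]$), and that surjectivity onto the linear part alone would only merge strata of fixed order of $t_p$, leaving the order-$k$ and order-$3k$ strata (for $3 \nmid k$) disconnected even though both have $d(p)=k$. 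Your substitute key lemma—the projective translations $\sigma_\tau \in PGL_3(\C)$ by $E[3]$ preserving $(\PP^2,E)$ (the Heisenberg part of the Hessian group), which give $N_{0,d}^{\PP^2/E,p}=N_{0,d}^{\PP^2/E,p+\tau}$ for a \emph{fixed} cubic—supplies exactly the missing translation part, and combined with the linear surjectivity generates all of $E[3] \rtimes SL_2(\Z/3d\Z)$, whose orbits on $E[3d]$ are precisely the level sets of $d(\cdot)$. This buys you independence from the classical (but unstated in the paper) fact that the monodromy acts transitively on the nine flexes, at the modest cost of invoking the classical automorphism group of a plane cubic; the apparent circularity between your second and third paragraphs is harmless, since the two invariances (under monodromy and under $\sigma_\tau$) are established independently and only then combined. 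Your identification of $d(p)$ with the order $n_p$ of $t_p$ (namely $d(p)=n_p$ if $3\nmid n_p$ and $d(p)=n_p/3$ otherwise) and the group-theoretic inputs ($SL_2(\Z)\twoheadrightarrow SL_2(\Z/3d\Z)$, transitivity of $SL_2(\Z/3d\Z)$ on elements of fixed order) are all correct.
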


\begin{proof}
If $p$ and $p'$ are two points in $P_d$ with $d(p)=d(p')$, then the monodromy of the family of all smooth cubics in $\PP^2$ is big enough to map $p$ on $p'$ and the result follows by deformation invariance of the Gromov--Witten invariants.
\end{proof}

For every positive integer $d$ and for every $k$ positive integer dividing $d$, we write
$N_{0,d}^{\PP^2/E,k}$ for $N_{0,d}^{\PP^2/E,p}$, with
$p \in P_d$ such that $d(p)=d$. This makes sense by Lemma \ref{lem_monodromy}.
With this new notation, the primitive invariants are the $N_{0,d}^{\PP^2/E,d}$, and the non-primitive invariants are the 
$N_{0,d}^{\PP^2/E,k}$ with $k<d$.

The following result computes the non-primitive invariants in terms of the primitive ones.

\begin{thm}\cite{bousseau2019takahashi} \label{thm_gw}
For every positive integer $d$ and for every $k$ positive integer dividing $d$, we have 
\[ (-1)^{d-1}N_{0,d}^{\PP^2/E,k}
=\sum_{\substack{d' \\ k|d'|d}} 
\frac{1}{(d/d')^2} (-1)^{d'-1}
N_{0,d'}^{\PP^2/E,d'} \,.\]
\end{thm}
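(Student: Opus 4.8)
The plan is to read the identity as a \emph{genus-0 multiple-cover formula} and to prove it by decomposing the moduli space according to the underlying primitive curve. It is convenient to pass to the sign-twisted invariants $\hat N_{0,d}^{\,p}\coloneq (-1)^{d-1}N_{0,d}^{\PP^2/E,p}$, since in these variables the asserted relation reads
\[ \hat N_{0,d}^{\,p}=\sum_{\substack{d'\mid d\\ p\in P_{d'}}}\frac{1}{(d/d')^2}\,(-1)^{d'-1}\,n_{0,d'}(p)\,,\]
where $n_{0,d'}(p)$ denotes the integer count (with multiplicity) of \emph{immersed, rigid} degree-$d'$ rational curves meeting $E$ only at $p$. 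Since $p\in P_{d'}$ exactly when $d(p)\mid d'$, the sum ranges over $k\mid d'\mid d$, so the theorem follows from two ingredients: (I) this local multiple-cover formula, in which the contribution of an $m$-fold cover is the clean factor $1/m^2$; and (II) the statement that $n_{0,d'}(p)=N_{0,d'}^{\PP^2/E,d'}$ for every $p\in P_{d'}$, i.e.\ that the primitive count does not depend on the torsion order $d(p)$ of the contact point.

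For (I), I would stratify $\overline{M}_0(\PP^2/E,d)^p$ by the degree $d'$ and the image of the immersed curve obtained after stabilizing the underlying map. A genus-0 degree-$d$ stable map to $(\PP^2,E)$ touching $E$ only at $p$ whose stabilized image is a fixed immersed degree-$d'$ curve $C$ factors, away from contracted components, through a degree $m=d/d'$ cover $\PP^1\to\PP^1$ fully ramified over the point lying above $p$. The corresponding substack is thus fibered over the finite set of such primitive curves $C$, with fiber a space of admissible maximal-tangency covers, and its virtual contribution is computed from the induced obstruction theory. This is the relative/log analogue of the Aspinwall--Morrison calculation and of the tropical-vertex computation of Gross--Pandharipande--Siebert \cite{MR2667135, MR2662867}, where the multiple-cover series is governed by a dilogarithm and the degree-$m$ term carries the coefficient $1/m^2$. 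The resulting honest factor is an orientation sign $(-1)^{d'(m-1)}$ times $1/m^2$, depending only on $m$ and on the contact order (hence on $d'$) and not on the specific $C$; the global sign twist by $(-1)^{d-1}$ is precisely what absorbs this orientation sign and renders the contribution the universal $1/m^2$. Degeneration to the normal cone of $E$ legitimizes isolating these covers and shows that the primitive part of the stratification contributes $n_{0,d'}(p)$, independently of the chosen enumerative model (J.~Li or log).

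The main obstacle is ingredient (II). Lemma \ref{lem_monodromy} only yields invariance of the counts under the monodromy of the family of smooth cubics, which preserves the order $d(p)$; it therefore relates $n_{0,d'}(p)$ among points of equal order, but does \emph{not} by itself identify the count through a non-primitive $p$ (with $d(p)=k<d'$) with the primitive invariant $N_{0,d'}^{\PP^2/E,d'}$. To bridge this gap I would either degenerate $E$ to a nodal cubic, so that $\Pic^0(E)\cong\G_m$ and the torsion points become roots of unity, and argue that in this toric/tropical limit the count of immersed degree-$d'$ curves is manifestly a function of the degree alone; or, more in the spirit of this survey, appeal to the scattering diagram description, in which the numbers $n_{0,d'}$ occur as coefficients of the initial wall-crossing functions. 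Those coefficients are attached to rays labelled by the degree and are insensitive to the particular torsion point realizing the contact, so that $p$-independence is built into the algebraic structure. Granting (II), substituting $n_{0,d'}(p)=N_{0,d'}^{\PP^2/E,d'}$ into the local formula of (I) and reorganizing the sum over $k\mid d'\mid d$ yields the claimed identity. I expect the verification of (II) to be the delicate point, since it is exactly where the geometry of the torsion stratification of $P_d$ must be controlled.
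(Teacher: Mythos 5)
Your proposal does not close, and the decisive gap is your ingredient (II). The statement that the count $n_{0,d'}(p)$ of ``new'' degree-$d'$ maximal tangency curves through $p$ depends only on $d'$ and not on the torsion order $d(p)$ is not an auxiliary lemma: combined with the multiple-cover formula (I), it \emph{is} the theorem, in essentially the form of \cite[Conjecture 1.3]{choi2018log} which this theorem resolves. So the reduction is circular unless one of your two proposed bridges carries real content, and neither does. Degenerating $E$ to a nodal cubic changes the pair into a log Calabi--Yau surface with maximal boundary; log Gromov--Witten invariants are not preserved across this degeneration, and the honest version of ``pass to a toric/tropical limit'' is precisely Gabele's Theorem \ref{tropical_correspondence}, where the point-independence is anything but manifest. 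Your scattering-diagram claim is factually wrong: in $\tilde{S}(\fD_{\PP^2/E})$ the initial data are the \emph{fixed} functions $\sum_{k\geqslant 1}\frac{(-1)^{k-1}}{k^2}z^{km}$ on the six rays emitted by the singular points; the invariants $N_{0,d}^{\PP^2/E,k}$ do not appear there but on the \emph{asymptotic vertical rays}, indexed by torsion classes $[x_0]$, whose attached functions $H_\ell$ genuinely depend on $\ell=d([x_0])$. The paper states explicitly that no direct algebraic proof of the required identity inside $\tilde{S}(\fD_{\PP^2/E})$ is known; its actual proof is entirely different from your route: it identifies $\tilde{S}(\fD_{\PP^2/E})$ with the stability-condition scattering diagram $\fD_{\PP^2}$ (Theorem \ref{thm_scattering_equality}), deduces the sheaf/curve comparison of Theorem \ref{thm_gw_sheaves}, and imports the $\chi$-independence on the sheaf side (Theorem \ref{thm_sheaves}), itself proved via Joyce--Song DT invariants, Toda's equivalence with strong rationality of PT invariants, and Konishi's topological-vertex computation.

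Ingredient (I) also has a genuine hole. Stratifying $\overline{M}_0(\PP^2/E,d)^p$ by ``the underlying primitive curve'' is not available: the moduli space contains stable maps whose stabilized image is \emph{reducible} --- two or more distinct maximal tangency curves through the same point $p$, joined by a contracted component carrying the contact point. This is exactly the chain $C_1\cup C_2\cup C_3$ example given in \cref{section_gw}, it occurs precisely when $p$ is non-primitive, and such composite configurations are not multiple covers of any single immersed curve $C$ yet contribute to the virtual class. Showing that their total contribution conspires with the pure covers to yield the clean $(-1)^{d'(m-1)}/m^2$ structure is part of the content of the theorem, not an input. Moreover, the uniform cover contribution you invoke is established only for immersed maximal tangency curves; curves that are non-immersed (e.g.\ at the contact point) do occur, and their contributions, computed in special cases in \cite{choi2019contributions}, are not individually of this form. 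So even granting (II), step (I) already presupposes the BPS structure you are trying to prove; this is exactly why the direct geometric attack of Choi--van Garrel--Katz--Takahashi stalled and why the published proof detours through sheaves.
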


In order to get rid of signs, we define
\[ \overline{\Omega}_{d,k}^{\PP^2/E}
\coloneq (-1)^{d-1}N_{0,d}^{\PP^2/E,k}\,.\]
We define ``BPS invariants"
$\Omega_{d,k}^{\PP^2/E}$
by the formula
\[ \overline{\Omega}_{d,k}^{\PP^2/E}=
\sum_{\substack{d' \\ k|d'|d}}
\frac{1}{(d/d')^2} \Omega_{d',k}^{\PP^2/E}\,.\]
We can rephrase Theorem 
\ref{thm_gw} as follows.

\begin{thm}\cite{bousseau2019takahashi} \label{thm_gw-bps}
For every positive integer $d$, the BPS invariant
$\Omega_{d,k}^{\PP^2/E}$ is independent of $k$.
\end{thm}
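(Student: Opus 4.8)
The plan is to observe that Theorem~\ref{thm_gw} and the defining relation for the BPS invariants exhibit \emph{the same} quantity $\overline{\Omega}_{d,k}^{\PP^2/E}$ as one and the same invertible divisor-sum transform applied to two a priori different sequences, and then to invert that transform. First I would rewrite Theorem~\ref{thm_gw} in terms of the barred invariants: since $(-1)^{d'-1}N_{0,d'}^{\PP^2/E,d'}=\overline{\Omega}_{d',d'}^{\PP^2/E}$, the theorem reads
\[ \overline{\Omega}_{d,k}^{\PP^2/E}=\sum_{\substack{d' \\ k\mid d'\mid d}}\frac{1}{(d/d')^2}\,\overline{\Omega}_{d',d'}^{\PP^2/E}\,, \]
whereas the definition of the BPS invariants is
\[ \overline{\Omega}_{d,k}^{\PP^2/E}=\sum_{\substack{d' \\ k\mid d'\mid d}}\frac{1}{(d/d')^2}\,\Omega_{d',k}^{\PP^2/E}\,. \]

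Next I would fix $k$ and reparametrize the divisors in both sums: writing $d=kn$ and $d'=km$, the condition $k\mid d'\mid d$ becomes $m\mid n$ and $d/d'=n/m$. Thus both displays take the form of the transform $(Tx)_n := \sum_{m\mid n}\frac{1}{(n/m)^2}x_m$ applied, respectively, to the sequence $u_m := \overline{\Omega}_{km,km}^{\PP^2/E}$ and to the sequence $v_m := \Omega_{km,k}^{\PP^2/E}$, and both transforms agree for every $n\geqslant 1$, being equal to $\overline{\Omega}_{kn,k}^{\PP^2/E}$. That is, $(Tu)_n=(Tv)_n$ for all $n$.

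Finally I would invert $T$. Viewing sequences as arithmetic functions, $T$ is Dirichlet convolution with $j\mapsto 1/j^2$, whose value at $1$ is nonzero, so $T$ is injective; concretely one proves $u_n=v_n$ for all $n$ by strong induction on $n$, the $n$-th equation reading $u_n+\sum_{m\mid n,\,m<n}(m/n)^2u_m=v_n+\sum_{m\mid n,\,m<n}(m/n)^2v_m$ with the lower terms cancelling by the inductive hypothesis, and the base case $n=1$ being $u_1=\overline{\Omega}_{k,k}^{\PP^2/E}=v_1$ (here $\Omega_{k,k}^{\PP^2/E}=\overline{\Omega}_{k,k}^{\PP^2/E}$ since for $d=k$ the defining sum has a single term). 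Hence $\Omega_{km,k}^{\PP^2/E}=\overline{\Omega}_{km,km}^{\PP^2/E}=\Omega_{km,km}^{\PP^2/E}$ for every $m$; taking $m=n$ gives $\Omega_{d,k}^{\PP^2/E}=\Omega_{d,d}^{\PP^2/E}$ for every $k\mid d$, which is manifestly independent of $k$.

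I do not expect a genuine obstacle here: granted Theorem~\ref{thm_gw}, the statement is entirely formal, and the only thing requiring care is the bookkeeping of the divisibility condition $k\mid d'\mid d$ and the identification of both the geometric recursion and the BPS definition with the same invertible transform on divisors of $d/k$. The real mathematical input lies in Theorem~\ref{thm_gw} itself, not in this corollary.
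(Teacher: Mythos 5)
Your proposal is correct and takes essentially the same approach as the paper: the paper defines the BPS invariants and then presents Theorem~\ref{thm_gw-bps} as a direct rephrasing of Theorem~\ref{thm_gw}, and your Dirichlet-convolution inversion (injective because the value at $1$ is $1\neq 0$) is precisely the formal argument that justifies this rephrasing, which the paper leaves implicit. As you rightly note, the genuine content resides in Theorem~\ref{thm_gw} itself, which the survey establishes via the scattering-diagram bridge to Theorem~\ref{thm_sheaves}, not in this formal corollary.
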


Our main goal is to explain a proof of Theorem \ref{thm_gw-bps}.

The study of the Gromov--Witten counts 
$N_{0,d}^{\PP^2/E}$ was initiated by N.\ Takahashi \cite{takahashi9605007curves, MR1844627}
around 1999 and some form of 
Theorem \ref{thm_gw-bps}
was then conjectured. 
A more recent study of this question has been done by Choi--van Garrel--Katz--Takahashi \cite{choi2018log, choi2018local, choi2019contributions}.
In particular, the statement of Theorem \ref{thm_gw-bps} can be found as 
\cite[Conjecture 1.3]{choi2018log}. The natural analogue of Theorem \ref{thm_gw-bps} should hold for any pair $(S,D)$ with $S$ a del Pezzo surface and $D$ a smooth anticanonical divisor. In the present paper, we focus on $(\PP^2,E)$.

We have already explained that counting rational curves in $\PP^2$
intersecting $E$ at a single point should be viewed as an analogue to counting rational curves in K3 surfaces. Let $S$ be a projective K3 surface and let $\beta$ be an effective curve class on $S$. Then, one defines a Gromov--Witten count 
$N_{0,\beta}^S \in \Q$ of rational curves in $S$ of class $\beta$, which is invariant under deformations of $S$ keeping $\beta$ effective \cite{MR2746343}. 
Using the monodromy in the moduli space of K3 surfaces, one can show that $N_{0,\beta}$ only depends on 
$\beta^2$ and on the divisibility of 
$\beta$ in the lattice 
$H_2(S,\Z)$.
The divisibility of $\beta$ for K3 surfaces is analogous to the choice of the point $p \in P_d$ for $(\PP^2,E)$:
if the divisibility of $\beta$ is $1$, i.e.\ if $\beta$ is primitive, then 
$N_{0,\beta}^S$ is a positive integer, counting rational curves with integer multiplicities, whereas if $\beta$ is non-primitive, $N_{0,\beta}^S$ is only a rational number, receiving complicated contributions from multiple covers.

One defines ``BPS invariants" $n_{0,\beta}^S$
by the formula \[ N_{0,\beta}^S=\sum_{\beta=k\beta'}
\frac{1}{k^3}n_{0,\beta'}^S\,.\]

The following result is due to
Klemm-Maulik-Pandharipande-Scheidegger
\cite{MR2669707}
in 2010 and is the analogue of Theorem \ref{thm_gw-bps} for K3 surfaces.

\begin{thm}
For every $\beta$ effective curve class on $S$, 
the BPS invariant $n_{0,\beta}$ is independent of the divisibility of $\beta$, i.e.\ depends on $\beta$
only through $\beta^2$.
\end{thm}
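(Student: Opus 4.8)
The plan is to reduce the statement to a computation of the reduced genus $0$ Gromov--Witten invariants $N_{0,\beta}^S$ and then to invert the multiple cover formula. Write $\beta = m\beta_0$ with $\beta_0$ primitive, so that $m$ is the divisibility and $\beta^2 = m^2 \beta_0^2$. By the deformation invariance recalled above, together with the global Torelli theorem and the surjectivity of the period map, $N_{0,\beta}^S$ depends only on the orbit of $\beta$ under the monodromy group acting on $H_2(S,\Z)$; for an effective class this orbit is determined precisely by the pair $(\beta^2, m)$. The multiple cover formula then reads
\[ N_{0,\beta}^S = \sum_{k \mid m} \frac{1}{k^3}\, n_{0,(m/k)\beta_0}^S \,, \]
so proving that each $n_{0,\beta'}^S$ depends only on $(\beta')^2$ is equivalent to showing that the $m$-dependence of $N_{0,\beta}^S$ is entirely accounted for by this formula with divisibility-independent BPS numbers. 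I would treat the two cases $m=1$ and $m>1$ separately.

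For the primitive case I would invoke the Yau--Zaslow formula, made rigorous by Bryan--Leung: on a K3 surface carrying an elliptic fibration with a section, the count of rational curves in a primitive class $\beta$ with $\beta^2 = 2h-2$ equals the coefficient of $q^h$ in
\[ \frac{q}{\Delta(q)} = \prod_{n \geqslant 1}(1-q^n)^{-24} \,, \]
where $\Delta(q) = q\prod_{n\geqslant1}(1-q^n)^{24}$ is the discriminant. Since for primitive $\beta$ there are no multiple covers, $n_{0,\beta}^S = N_{0,\beta}^S$ is this coefficient, which manifestly depends only on $\beta^2$. This settles $m=1$.

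For the imprimitive case I would use the Gromov--Witten/Noether--Lefschetz correspondence of Maulik--Pandharipande. Choosing a generic one-parameter family $\pi\colon \mathcal{X} \to C$ of K3 surfaces over a base curve produces a threefold $\mathcal{X}$ whose genus $0$ Gromov--Witten invariants decompose as a weighted sum of the reduced K3 invariants $N_{0,\beta}^S$, the weights being Noether--Lefschetz numbers of the family. By the work of Borcherds and Kudla--Millson these Noether--Lefschetz numbers are Fourier coefficients of modular forms, so the correspondence becomes an identity between modular generating series. Computing the threefold invariants independently---for instance on an STU-type K3-fibered Calabi--Yau threefold where they are accessible---and solving the resulting relations over all values of $(\beta^2,m)$ determines $N_{0,\beta}^S$ completely and exhibits its divisibility dependence as exactly the one produced by the multiple cover formula above. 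Inverting that formula then yields that $n_{0,\beta}^S$ depends on $\beta$ only through $\beta^2$.

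The main obstacle is the imprimitive step: unlike the primitive case, there is no direct enumerative count, and everything hinges on having an independent, closed-form handle on the threefold invariants together with enough modularity to pin down the full system of equations indexed by $(\beta^2, m)$. Controlling the interplay between the Noether--Lefschetz modular forms and the K3 generating series---and in particular checking that the solution forces divisibility-independent BPS numbers rather than merely being compatible with them---is where the real work lies. A secondary subtlety is that extracting the $n_{0,\beta}^S$ presupposes the multiple cover formula itself, whose precise $1/k^3$ shape reflects the reduced obstruction theory on K3 surfaces and must be justified before the inversion is meaningful.
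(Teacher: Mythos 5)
Your proposal is correct and takes essentially the same route as the paper, which states this theorem without proof and cites Klemm--Maulik--Pandharipande--Scheidegger \cite{MR2669707}: their argument is precisely your outline, namely monodromy and deformation invariance of the reduced invariants reducing the dependence to $(\beta^2,m)$, the Bryan--Leung computation of the Yau--Zaslow formula for primitive classes, and the Maulik--Pandharipande Gromov--Witten/Noether--Lefschetz correspondence combined with Borcherds-type modularity of the Noether--Lefschetz numbers and independently computed (mirror symmetry) invariants of an STU-type K3-fibered threefold to pin down the imprimitive case. One small adjustment: since the paper \emph{defines} $n_{0,\beta}^S$ by the relation $N_{0,\beta}^S=\sum_{\beta=k\beta'}\frac{1}{k^3}n_{0,\beta'}^S$, the inversion requires no prior justification of the $1/k^3$ multiple cover shape---that shape would only need independent proof if one wanted the enumerative BPS interpretation of $n_{0,\beta}^S$, not for the statement at hand.
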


\subsection{Dimension $1$ sheaves on $\PP^2$}
We introduce now a topic seemingly disjoint from the questions discussed in \cref{section_gw}.
We consider coherent sheaves $F$ on $\PP^2$ supported on curves of degree 
$d(F)$ and of Euler characteristic  
$\chi(F)$.
We recall that such coherent sheaf $F$ is said to be Gieseker semistable (resp.\ stable) if:
\begin{enumerate}
    \item $F$ is pure of dimension $1$, i.e.\ if every non-zero subsheaf of $E$ is also supported in dimension $1$.
    \item For every non-zero and strict subsheaf $F'$ of $F$, we have 
    \[ \frac{\chi(F')}{d(F')} \leqslant \frac{\chi(F)}{d(F)}\,\]
    (resp.\ $\frac{\chi(F')}{d(F')} < \frac{\chi(F)}{d(F)}$).
\end{enumerate}

For every positive integer $d$ and for every integer $\chi$, let $M_{d,\chi}$ be the moduli space of 
$S$-equivalence classes of Gieseker semistable sheaves $F$ on $\PP^2$, supported on curves of degree $d$ and with $\chi(F)=\chi$. Such moduli space can be constructed by geometric invariant theory. We refer to \cite{huybrechts2010geometry} for details. One can show \cite{MR1263210} that 
$M_{d,\chi}$ is an irreducible normal projective variety of dimension $d^2+1$. Taking the support defined by the Fitting ideal of a dimension $1$ sheaf defines a morphism
\[ \pi \colon M_{d,\chi} \rightarrow |\cO(d)|\,,\]
where $|\cO(d)|$ is the linear system of degree $d$ curves in $\PP^2$.
If $C \in |\cO(d)|$ is smooth, then 
$\pi^{-1}(C)$ is isomorphic to the Jacobian variety of $C$. If $C$ is singular, then $\pi^{-1}(C)$ is in general complicated, and this makes the study of the global geometry of 
$M_{d,\chi}$ non-trivial.

If $F$ is Gieseker semistable sheaf, then 
$\Ext^2(F,F)=\Hom(F,F \otimes K_{\PP^2})^\vee$ by Serre duality 
and $\Hom(F,F \otimes K_{\PP^2})^\vee=0$ by negativity of $K_{\PP^2}$ and semistability of 
$F$. In particular, the locus $M_{d,\chi}^{st} \subset M_{d,\chi}$ of stable objects is always smooth.
If $d$ and $\chi$ are coprime, then
$M_{d,\chi}=M_{d,\chi}^{st}$ and so 
$M_{d,\chi}$ is smooth. In general, there are strictly semistable sheaves and
$M_{d,\chi}$ is singular.

For every $d$ and $\chi$, we denote by $Ie(M_{d,\chi})$ the Euler characteristic of $M_{d,\chi}$ for the intersection cohomology and we define
\[ \Omega_{d,\chi}^{\PP^2}\coloneq (-1)^{d-1} Ie(M_{d,\chi}) \in \Z \,.\]
When $M_{d,\chi}$ is smooth, intersection cohomology coincides with singular cohomology and so $Ie(M_{d,\chi})$ coincides with the ordinary topological Euler characteristic.

Tensoring by $\cO(1)$ induces an isomorphism $M_{d,\chi} \simeq M_{d,\chi+d}$, and so $M_{d,\chi}$
only depends on $\chi$ modulo $d$. Similarly, Serre duality implies
that 
$M_{d,\chi} \simeq M_{d,\chi'}$
if $\chi=-\chi' \mod d$. However, if 
$d \geqslant 3$ and 
$\chi \neq \pm \chi' \mod d$, then the algebraic varieties 
$M_{d,\chi}$ and $M_{d,\chi'}$ are not isomorphic. Indeed, it is known by \cite{woolf2013nef} that they have different nef cones. This makes the following result quite remarkable:

\begin{thm}\cite{bousseau2019takahashi} \label{thm_sheaves} 
For every positive integer $d$, 
$\Omega_{d,\chi}^{\PP^2}$ is independent of $\chi$.
\end{thm}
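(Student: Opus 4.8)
The plan is to interpret $\Omega_{d,\chi}^{\PP^2} = (-1)^{d-1} Ie(M_{d,\chi})$ as a numerical Donaldson--Thomas invariant attached to the Chern character $(0,d,\ch_2)$ (with $\ch_2$ determined by $\chi$ through Riemann--Roch) in the category $\DC(\PP^2)$, and to exploit the fact that such invariants are insensitive both to deformations of the stability condition away from walls of marginal stability and to the action of derived autoequivalences. The two isomorphisms already recorded in the text, $M_{d,\chi}\simeq M_{d,\chi+d}$ (tensoring by $\cO(1)$) and $M_{d,\chi}\simeq M_{d,-\chi}$ (Serre duality), realize on charges the obvious symmetries $\chi\mapsto\chi+d$ and $\chi\mapsto-\chi$, and hence already give $\Omega_{d,\chi}^{\PP^2}=\Omega_{d,\chi'}^{\PP^2}$ whenever $\chi'\equiv\pm\chi \bmod d$. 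The content of the theorem is that equality persists across the remaining residues, for which these naive symmetries do not suffice (indeed, by the nef-cone remark the underlying varieties are then not even isomorphic), so a genuinely new mechanism is needed.

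First I would set up the two-real-dimensional family of Bridgeland stability conditions on $\PP^2$ containing the Gieseker chamber, and organize the invariants $\Omega_{d,\chi}^{\PP^2}$ as the wall-functions of a scattering diagram $\fD$ in this family. The anchor of the construction is the case $d=1$: a stable sheaf of degree $1$ is a line bundle on a line, so $M_{1,\chi}\simeq(\PP^2)^\vee$ and $\Omega_{1,\chi}^{\PP^2}=3$ for every $\chi$, giving the initial rays of $\fD$. Consistency of $\fD$ in the Kontsevich--Soibelman sense then propagates this initial data and expresses each higher $\Omega_{d,\chi}^{\PP^2}$ through wall-crossing from lower-degree data; the normalization $(-1)^{d-1}Ie$ is precisely the one for which the wall-crossing is governed by the numerical DT Lie bracket, so that the wall-functions are well defined.

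The independence of $\chi$ would then follow from a symmetry of $\fD$. The derived autoequivalences of $\DC(\PP^2)$ generated by $-\otimes\cO(1)$, by Serre duality, and by the mutations of the exceptional collection $\langle\cO,\cO(1),\cO(2)\rangle$ act on the space of stability conditions and on $K(\PP^2)\simeq\Z^3$, carrying $\fD$ to itself while permuting its rays. Since numerical DT invariants are preserved by autoequivalences, it suffices to check that the induced action on charges $(0,d,\ch_2)$ is transitive on the residues $\chi\bmod d$ for each fixed $d$; the unipotent part already shifts $\chi$ by $d$, and the mutations mix the degree-$0$ and degree-$1$ parts of the charge enough to reach every residue, completing the argument.

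I expect the main obstacle to be the invariance step itself. Because $M_{d,\chi}$ and $M_{d,\chi'}$ are genuinely non-isomorphic projective varieties with different ordinary cohomology, one cannot match them geometrically, and the equality of $\Omega$ can hold only at the level of the numerical DT invariant. Making this precise requires (a) controlling the contributions of strictly semistable sheaves so that $(-1)^{d-1}Ie(M_{d,\chi})$ really is the integer DT invariant of the class rather than merely an Euler characteristic, and (b) verifying that the path in the space of stability conditions realizing a given autoequivalence, or connecting two Gieseker chambers, crosses no wall on which the invariant jumps. Both points are where the scattering-diagram formalism does the real work, by reducing the global comparison to the local consistency relations at the joints of $\fD$.
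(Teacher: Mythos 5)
Your central mechanism fails, and it fails for a structural reason. By Bondal--Orlov, since $\PP^2$ is Fano the group of autoequivalences of $\D^b(\PP^2)$ is generated by shifts, automorphisms of $\PP^2$, and tensoring by line bundles; mutations of the exceptional collection $\langle\cO,\cO(1),\cO(2)\rangle$ are operations on collections, not autoequivalences of the category. On a dimension-$1$ charge $(0,d,\chi)$ the actual autoequivalences act exactly by $\chi\mapsto\pm\chi+kd$, i.e.\ they reproduce only the ``naive'' symmetries you correctly dismiss at the outset, and nothing more. Passing to the Calabi--Yau geometry does not rescue the argument: $\D^b(K_{\PP^2})$ does carry a larger group (a $\Gamma_1(3)$ generated by spherical twists and line-bundle twists, which indeed preserves the scattering diagram $\fD_{\PP^2}$), but any autoequivalence induces an automorphism of the charge lattice and therefore preserves the divisibility of a class. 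It can never carry the primitive class $(0,d,\chi)$ with $\gcd(d,\chi)=1$ to the class $(0,d,0)$ of divisibility $d$, so no orbit is transitive on residues $\chi\bmod d$ once $d\geqslant 2$. The whole content of the theorem is precisely equality of invariants across classes of \emph{different} divisibility --- the analogue of the Klemm--Maulik--Pandharipande--Scheidegger statement for K3 surfaces, where primitive and imprimitive classes are likewise not related by any monodromy or autoequivalence --- so no symmetry of $\fD_{\PP^2}$ can prove it. This is consistent with the paper's explicit remark that a purely algebraic proof from the scattering diagram is not known. (Two smaller inaccuracies: the initial rays of $\fD_{\PP^2}$ are the rays $R_{\gamma(\cO(n))}$ of the line bundles at the singular points on the parabola, not the $d=1$ vertical rays, which are themselves produced by scattering; and proving the sheaf theorem via the Gromov--Witten correspondence would invert the paper's logic, since Theorem \ref{thm_main} is used to \emph{deduce} the Gromov--Witten statement from the sheaf statement, so that route risks circularity.)

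The paper's actual proof goes through an entirely different chain. One first shows that $\Omega_{d,\chi}^{\PP^2}=(-1)^{d-1}Ie(M_{d,\chi})$ agrees with the Joyce--Song generalized Donaldson--Thomas invariant of dimension-$1$ sheaves on the noncompact Calabi--Yau 3-fold $K_{\PP^2}$ --- this is the nontrivial version of your point (a), handling the strictly semistable loci via intersection cohomology. The $\chi$-independence is then a case of a general Joyce--Song conjecture which, by a theorem of Toda, is equivalent to the strong rationality conjecture for stable pair (PT) invariants; via the DT/PT correspondence this reduces to a strong rationality statement for DT invariants, and since $K_{\PP^2}$ is toric, that statement was verified by Konishi through explicit topological vertex computations. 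In other words, the key input is an analytic/combinatorial computation on the local Calabi--Yau 3-fold, not a symmetry of the moduli spaces or of the scattering diagram; if you want to salvage your approach, the missing idea you would need to replace the transitivity claim is exactly this wall-crossing-plus-vertex input.
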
 

\begin{proof}
We give a sketch of proof. 
One first proves
\cite{bousseau2019takahashi} that the invariants 
$\Omega_{d,\chi}^{\PP^2}$
coincide with the dimension $1$ sheaves DT invariants of local 
$\PP^2$ defined by Joyce-Song 
\cite{MR2951762}. Thus, Theorem 
\ref{thm_sheaves} becomes a special case of a general conjecture of Joyce-Song, which 
by Toda 
\cite[Theorem 6.4]{MR2892766} is equivalent to the strong rationality conjecture for stable pair PT invariants
(see \cite[Conjecture 3.14 ]{MR2545686} and 
\cite[Conjecture 6.2]{MR2892766}). 

Given the known DT/PT correspondence (proved by wall-crossing in the derived category
for general Calabi-Yau 3-folds \cite{MR2813335}, or by computation of both sides in the toric case, see
\cite[\S 5]{MR2746343}), the strong rationality conjecture for PT invariants can be translated into a strong rationality statement for DT invariants.

As $K_{\PP^2}$ is a toric Calabi-Yau 3-fold, 
DT invariants can be computed by localization and organized using the topological vertex formalism \cite{MR2264664}.
By a study of the explicit formulas coming from the topological vertex formalism, Konishi  \cite[Theorem 1.3]{MR2215440}
proved that the strong rationality statement holds for local toric surfaces, 
and so in particular for $K_{\PP^2}$
(the proof was later generalized to arbitrary toric Calabi-Yau 3-folds in \cite{MR2250076}).
\end{proof}

\subsection{Main result}

Theorem \ref{thm_gw-bps} and Theorem
\ref{thm_sheaves} are formally quite similar, despite dealing with rather different geometric objects.
Theorem \ref{thm_gw-bps} is about understanding contributions of multiple covers and contracted components in Gromov--Witten theory of $(\PP^2,E)$, whereas Theorem \ref{thm_gw-bps} is about understanding contributions of strictly semistable sheaves in
dimension $1$ sheaf counting on $\PP^2$.

Our main goal is to give a survey of the proof of the following result.

\begin{thm}\label{thm_main} \cite{bousseau2019takahashi}
Theorem \ref{thm_sheaves} is equivalent to Theorem \ref{thm_gw-bps}.
\end{thm}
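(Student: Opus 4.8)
The plan is to show that both Theorem \ref{thm_gw-bps} and Theorem \ref{thm_sheaves} are two manifestations of a single underlying structure, namely a scattering diagram attached to $(\PP^2,E)$, and then to prove the equivalence by matching the invariants that appear as wall-functions in this diagram. Concretely, both families of invariants $\Omega_{d,k}^{\PP^2/E}$ and $\Omega_{d,\chi}^{\PP^2}$ should arise as coefficients in the expansion of products of automorphisms attached to rays in a two-dimensional scattering diagram, where the rays are labelled by slopes encoding the relevant discrete data (the divisor $d$ and the torsion/Euler-characteristic parameter). The key conceptual input is that a scattering diagram is uniquely determined by its incoming walls through the consistency (path-ordered product triviality) condition, so if the two geometric problems produce scattering diagrams with the \emph{same} incoming data, they must produce the \emph{same} consistent completion, and hence the same outgoing wall-functions.

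First I would construct, on the Gromov--Witten side, a scattering diagram $\fD^{\PP^2/E}$ whose consistency encodes the relative invariants $N_{0,d}^{\PP^2/E,k}$; this uses the fact, essentially contained in Lemma \ref{lem_isolated_curves} and the torsion-point analysis giving the decomposition indexed by $p \in P_d$, that the $(\PP^2,E)$ counts organize into a tropical/algebraic wall-crossing structure in which a wall of slope determined by $d(p)$ carries a generating function built from the $N_{0,d}^{\PP^2/E,k}$. Next I would construct, on the sheaf side, a scattering diagram $\fD^{\PP^2}$ whose wall-functions are generating functions of the intersection-cohomology Euler characteristics $Ie(M_{d,\chi})$, i.e. of the DT-type invariants $\Omega_{d,\chi}^{\PP^2}$; here the natural tool is the Joyce--Song/Kontsevich--Soibelman wall-crossing formalism for the moduli of one-dimensional sheaves, in which changing the stability parameter (equivalently $\chi$) is exactly a scattering transformation. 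The crux of the argument is then to identify these two scattering diagrams, which I would do by computing their incoming walls and checking they coincide, after translating between the Gromov--Witten degree/torsion bookkeeping and the sheaf degree/Euler-characteristic bookkeeping via the support morphism $\pi \colon M_{d,\chi} \to |\cO(d)|$.

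The main obstacle I expect is precisely this last identification: matching the two sets of discrete invariants requires a careful dictionary between the parameter $k$ (equivalently $d(p)$, governing torsion order of the contact point in $\Pic^0(E)$) on the enumerative side and the parameter $\chi \bmod d$ (governing the Euler characteristic of the sheaf) on the sheaf side, together with the compatibility of the two multiple-cover/strictly-semistable correction structures. The BPS-recursion in Theorem \ref{thm_gw} (peeling off the $1/(d/d')^2$ multiple-cover contributions) must be shown to correspond term-by-term to the DT multiple-cover structure underlying the definition of $\Omega_{d,\chi}^{\PP^2}$; getting the powers of the contraction factors and the signs $(-1)^{d-1}$ to agree on the nose is where the real work lies. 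Once the two diagrams are proven equal, the equivalence is immediate: the statement that $\Omega_{d,k}^{\PP^2/E}$ is independent of $k$ (Theorem \ref{thm_gw-bps}) and the statement that $\Omega_{d,\chi}^{\PP^2}$ is independent of $\chi$ (Theorem \ref{thm_sheaves}) both translate into the \emph{same} symmetry of the common wall-functions, so each implies the other.

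I would finish by remarking that this strategy mirrors the Gross--Pandharipande picture \cite{MR2662867, MR2667135} relating log Gromov--Witten invariants of log Calabi--Yau surfaces to quiver representation counts through scattering, which gives both a guide for the construction and confidence that the consistency condition rigidly determines all outgoing data from the incoming data.
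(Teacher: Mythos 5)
Your overall architecture matches the paper's: two scattering diagrams---one tropical/Gromov--Witten coming from a degeneration of $(\PP^2,E)$, one sheaf-theoretic---identified by matching initial data and invoking uniqueness of consistent completions, with the resulting wall-function identity (the paper's Theorem \ref{thm_gw_sheaves}) making the two independence statements equivalent. However, your sheaf-side construction contains a step that fails as written. You propose that ``changing the stability parameter (equivalently $\chi$) is exactly a scattering transformation'' in a Joyce--Song-type formalism for moduli of one-dimensional sheaves. But $\chi$ is not a stability parameter: it is a component of the class $\gamma=(0,d,\chi)\in K_0(\PP^2)$, and for fixed $d$ the moduli spaces $M_{d,\chi}$ with $\chi\neq\pm\chi' \bmod d$ are genuinely non-isomorphic (different nef cones, by Woolf), so no variation of Gieseker-type stability inside the abelian category of dimension-$1$ sheaves interpolates between them. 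Worse, any scattering diagram built only from dimension-$1$ classes is degenerate: the ray of class $\gamma=(r,d,\chi)$ has direction $(-r,d)$, so all rank-$0$ rays are parallel, and parallel rays commute by the standing assumption $[\fg_m,\fg_{m'}]=0$ for parallel $m,m'$; such a diagram is trivially consistent and imposes no relation at all between different values of $\chi$.

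The paper's resolution, absent from your proposal, is to leave the abelian category: one embeds the two-dimensional slice $U$ into $\Stab \D^b(\PP^2)$ (Lemma \ref{lem_stability}) and builds $\fD_{\PP^2}$ from Bridgeland moduli of objects of \emph{all} classes $(r,d,\chi)$, with moving hearts. The crucial initial data are then the rays $R_{\gamma(\cO(n))}$ attached to the line bundles $\cO(n)$ (rank $\pm 1$ objects), emanating from the points $(n,-\tfrac{n^2}{2})$ where $Z_{\gamma(\cO(n))}$ vanishes---precisely matching the focus-focus singularities on the Gromov--Witten side. Your ``main obstacle'' (the $k$ versus $\chi$ dictionary and the multiple-cover/semistable bookkeeping) is then not a separate matching problem: it is read off automatically by comparing asymptotic vertical rays, where Bridgeland semistability coincides with Gieseker semistability for $y\gg 0$ (this is Theorem \ref{thm_gw_sheaves}, with the combinatorial factors $s_{k,\ell}/r_\ell$ coming from the torsion-point analysis of Theorem \ref{tropical_correspondence}). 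The genuinely hard step, which you do not flag, is showing that near the parabola $y=-\tfrac{x^2}{2}$ the rays $R_{\gamma(\cO(n))}$ are the \emph{only} rays of $\fD_{\PP^2}$, proved via a quiver description of the nearby stability conditions. Note finally that the paper explicitly contrasts its situation with the Gross--Pandharipande quiver setting you cite as a guide: there a fixed heart suffices, whereas here the moving hearts of Bridgeland stability conditions are essential---exactly the point your proposal elides.
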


As we have already proved Theorem \ref{thm_sheaves}, this gives a proof of Theorem \ref{thm_gw-bps}.

The proof of Theorem \ref{thm_main} 
relies on the fact that the same algorithm, under the form of a scattering diagram, computes the invariants $\Omega_{d,k}^{\PP^2/E}$ and $\Omega_{d,\chi}^{\PP^2}$.
On the Gromov--Witten side, the scattering diagram will appear as tropical description of a normal crossing degeneration of $(\PP^2,E)$ \cite{gabele2019tropical}. On the sheaf side, the scattering diagram will appear as describing wall-crossing in the space of Bridgeland stability conditions on the derived category of coherent sheaves 
on $\PP^2$ \cite{bousseau2019scattering, bousseau2019takahashi}.

\subsection{Acknowledgment}
I thank Xiaobo Liu, Rahul Pandharipande, Emanuel
Scheidegger, and Qizheng Yin for the organization of the Beijing-Zurich moduli workshop.
I thank Michel van Garrel for sharing his notes of my lectures.

I acknowledge the support of Dr. Max Rössler, the Walter Haefner Foundation
and the ETH Zurich Foundation.

\section{Scattering diagrams and curve counting}

\subsection{Local scattering diagrams}
\label{section_local_scattering}
We follow \cite{MR2846484} and \cite{MR2667135}.
Let $M \simeq \Z^2$ be a two-dimensional 
lattice. Let $\fg =\bigoplus_{m \in M} \fg_m$
be a $M$-graded Lie algebra: we have a Lie bracket $[-,-]$ on $\fg$ such that 
$[\fg_m,\fg_{m'}]\subset \fg_{m+m'}$ for every 
$m, m' \in M$. We assume that 
$[\fg_m,\fg_{m'}]=0$ if $m$ and $m'$ are parallel.

Let $R$ be an Artinian local $\C$-algebra
with maximal ideal $\fm_R$. One can think 
about $R=\C[t]/t^N$ and $\fm_R=t R$. Then 
$\fg \otimes \fm_R$ is naturally a nilpotent Lie algebra for the bracket defined by 
$[g \otimes t, g' \otimes t]=[g,g'] \otimes tt'$. We denote by
$G \coloneq \exp(\fg \otimes \fm_R)$ the corresponding nilpotent group.
Concretely, elements of $G$ are elements of the form $e^g$, $g \in \fg$, and the product 
$e^g e^{g'}$ is defined by the Baker-Campbell-Hausdorff formula.

\begin{defn}
A \emph{ray} is a pair $(\fd, H_\fd)$, where:
\begin{enumerate}
    \item $\fd$ is an oriented half-line
    in $\R^2 \simeq M \otimes \R$, starting at $0$. We say that $\fd$ is ingoing if it points towards $0$ or outgoing it it points away from $0$.
    \item $H_\fd$ is an element of 
    $\fg \otimes \fm_R$ such that, writing 
    $H_{\fd}=\sum_j H_j$ with $H_j \in g_{m_j}$ and $H_j \neq 0$, all the elements $m_j \in M$
    are negatively collinear with the direction of $\fd$.
\end{enumerate}
\end{defn}

\begin{defn}
A \emph{local scattering diagram} $\fD$ is a finite collection of rays $(\fd, H_\fd)$.
\end{defn}

\begin{defn}
Let $\fD$ be a local scattering diagram.
We say that $\fD$ is \emph{consistent} if 
\[\vec{\prod_{(\fd,H_\fd)}} \exp (H_\fd)^{\epsilon_\fd} =1 \]
in $G$, where the ordered product is taken 
over the rays in the anticlockwise order, and where $\epsilon_\fd=+1$ if $\fd$ is outgoing 
and $\epsilon_\fd=-1$ if $\fd$ is ingoing.
\end{defn}

We adopt the normalization to identify two rays $(\fd, H)$ and $(\fd,H')$ with the same support $\fd$ to form a new ray 
$(\fd, H+H')$.

The following result goes back to Kontsevich-Soibelman
\cite{MR2181810}.

\begin{thm} \label{KS_lemma}
Let $\fD$ be a local scattering diagram. Then there exists a single consistent local scattering diagram $S(\fD)$ obtained from $\fD$ by adding only outgoing rays.
\end{thm}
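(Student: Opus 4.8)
The plan is to construct $S(\fD)$ perturbatively, order by order in the maximal ideal $\fm_R$. Since $R$ is Artinian local, $\fm_R$ is nilpotent, say $\fm_R^N=0$, giving the finite filtration $\fg\otimes\fm_R\supset\fg\otimes\fm_R^2\supset\cdots\supset\fg\otimes\fm_R^N=0$. Writing $G^{(k)}\coloneq\exp(\fg\otimes\fm_R^k)$, the structural fact I will exploit is that $G^{(k)}$ is central in $G$ modulo $G^{(k+1)}$: for $a\in\fg\otimes\fm_R^k$ and $b\in\fg\otimes\fm_R$ one has $[a,b]\in\fg\otimes\fm_R^{k+1}$. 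This lets me linearize the consistency equation one power of $\fm_R$ at a time.

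First I set $\fD_1=\fD$, which is automatically consistent modulo $\fm_R$ because every $\exp(H_\fd)\equiv 1\bmod\fm_R$. Inductively, suppose I have enlarged $\fD$ by outgoing rays to a diagram $\fD_k$ whose ordered product $\theta_k\coloneq\vec{\prod}\exp(H_\fd)^{\epsilon_\fd}$ lies in $G^{(k)}$, so that $\theta_k=\exp(\xi_k)$ with $\xi_k\in\fg\otimes\fm_R^k$. I decompose the leading term into graded pieces, $\xi_k\equiv\sum_{m\neq 0}\xi_{k,m}\bmod\fm_R^{k+1}$ with $\xi_{k,m}\in\fg_m\otimes(\fm_R^k/\fm_R^{k+1})$; there is no $m=0$ component, since the hypothesis $[\fg_m,\fg_{m'}]=0$ for parallel $m,m'$ prevents any $\fg_0$ term from being generated. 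For each $m$ with $\xi_{k,m}\neq 0$ I add the outgoing ray in direction $-m$ (so that $m$ is negatively collinear with it, as required of a ray) carrying $H$-value $-\xi_{k,m}\in\fg_m\otimes\fm_R^k$, merging with any existing ray in that direction. Because the new factors lie in $G^{(k)}$, modulo $\fm_R^{k+1}$ I may move them freely past all other factors and collect them, so the new ordered product becomes $\exp\bigl(\xi_k-\sum_m\xi_{k,m}\bigr)\equiv 1\bmod\fm_R^{k+1}$. This produces $\fD_{k+1}$, consistent modulo $\fm_R^{k+1}$, still obtained from $\fD$ by adding only outgoing rays, and unchanged modulo $\fm_R^k$ since the new rays are $\equiv 1\bmod\fm_R^k$. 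After $N$ steps I reach $S(\fD)\coloneq\fD_N$, which is consistent because $\fm_R^N=0$.

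For uniqueness I argue that the correction is forced at every order. Suppose $S(\fD)$ and $\tilde S(\fD)$ are both consistent and both obtained by adding only outgoing rays; identifying a diagram with the collection of total $H$-values attached to the various ray directions, I compare them modulo successive powers of $\fm_R$. Granting that they agree modulo $\fm_R^k$, the order-$k$ defect computed from this common lower-order data is a fixed element $\xi_k=\sum_m\xi_{k,m}$, and consistency modulo $\fm_R^{k+1}$ forces the order-$k$ part of the outgoing ray in each direction $-m$ to equal $-\xi_{k,m}$; since distinct graded directions land in independent summands $\fg_m$, no ray in any other direction may carry an order-$k$ term, as it would create an uncancellable defect. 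Hence the two diagrams agree modulo $\fm_R^{k+1}$, and by finiteness of the filtration $S(\fD)=\tilde S(\fD)$.

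The main obstacle is this linearization step, namely establishing that, working one power of $\fm_R$ higher, the nonabelian Baker--Campbell--Hausdorff product collapses to an abelian sum in which the scattering correction decouples across the graded directions $m$. Everything rests on the centrality of $G^{(k)}$ modulo $G^{(k+1)}$ together with the bookkeeping that each graded defect $\xi_{k,m}\in\fg_m$ is cancellable by exactly one admissible outgoing ray, the one in direction $-m$; the hypothesis $[\fg_m,\fg_{m'}]=0$ for parallel $m,m'$ is what guarantees that no $\fg_0$ defect arises and that the corrections stay within the class of outgoing rays.
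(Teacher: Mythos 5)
Your proof is correct and follows essentially the same route as the paper: induction on powers of $\fm_R$, writing the order-$k$ defect of the ordered product as $\exp(\xi_k)$ with $\xi_k\in\fg\otimes\fm_R^k$, cancelling each graded component $\xi_{k,m}\in\fg_m$ by the unique admissible outgoing ray in direction $-m$, and using that such factors are central modulo $\fm_R^{k+1}$ so the correction abelianizes. You even supply two details the paper's sketch leaves implicit --- that the hypothesis $[\fg_m,\fg_{m'}]=0$ for parallel $m,m'$ rules out an uncancellable $\fg_0$ defect, and the explicit order-by-order uniqueness induction --- but these are elaborations, not a different argument.
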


\begin{proof}
We prove the result in $R/\fm_R^k$ by induction on $k$.

For $k=1$, we have $H_\fd=0 \mod \fm_R$, so $\exp(H_\fd)=1$ for every ray $(\fd,H_\fd)$, and so every local scattering diagram 
is consistent modulo $\fm_R$.

We assume that we have constructed 
$S(\fD) \mod \fm_R^k$, with rays 
$(\fd,H_\fd)$ such that 
\[\vec{\prod_{(\fd,H_\fd)}} \exp (H_\fd)^{\epsilon_\fd} =1 \mod \fm_R^k \,.\]
Then, we can uniquely write
\[\vec{\prod_{(\fd,H_\fd)}} \exp (H_\fd)^{\epsilon_\fd} =\exp (-\sum_j g_j) \]
for some $g_j \in g_{m_j} \otimes \fm_R^k$. We obtain $S(\fD) \mod \fm_R^{h+1}$ by adding the outgoing rays 
$(-\R_{\geqslant 0}m_j, g_j)$. This new local scattering diagram is consistent by construction, because 
$[\fg \otimes \fm_R^k, \fg \otimes \fm_R^k] \subset \fg \otimes \fm_R^{2k} \subset 
\fg \otimes \fm_R^{k+1}$ and so all the rays commute modulo $\fm_R^{k+1}$.
\end{proof}

\textbf{Examples}
\begin{enumerate}
\item Propagation of rays. Let 
$\fD$ be the local scattering diagram consisting of a single ingoing ray 
$(\fd=\R_{\geqslant 0}m, H_\fd)$. Then 
$S(\fD)$ is obtained by adding the outgoing ray 
$(-\R_{\geqslant 0}m, H_\fd)$, i.e.\ one propagates the ingoing ray.
\item Elementary scattering. We take 
$R=\C[t_1,t_2]/(t_1^2,t_2^2)$ and $\fD$ the local ingoing diagram consisting of two ingoing rays $(\R_{\geqslant 0}m_1, H_1)$
and $(\R_{\geqslant 0}m_2, H_2)$, propagating into two outgoing rays 
$(-\R_{\geqslant 0} m_1,H_1)$ and 
$(-\R_{\geqslant 0} m_2,H_2)$. We assume that 
$m_1$ and $m_2$ are primitive in $M$, and that $H_1 \in \fg_{m_1} \otimes t_1 R$ and $H_2 \in \fg_{m_2} \otimes t_2 R$.
In particular, we have $H_1^2=H_2^2=0$ and so 
\[ \exp(-H_2)\exp(-H_1)\exp(H_2) \exp(H_1)
=(1-H_2)(1-H_1)(1+H_2)(1+H_1)\]
\[=1-[H_1,H_2]=\exp(-[H_1,H_2])\,.\]
It follows that $S(\fD)$ is obtained by adding the outgoing ray 
\[(-\R_{\geqslant 0} (m_1+m_2), [H_1,H_2])\,.\]
\end{enumerate}

Computation of the consistent completion $S(\fD)$ of a general local scattering diagram $\fD$ can always be reduced to the computation of several elementary scatterings using a perturbation trick
\cite{MR2667135}. Assume that we work with 
$R =\C[t]/t^{N+1}$. We have a natural embedding 
\[ \C[t]/t^{N+1} \hookrightarrow 
\C[u_1,\dots,u_N]/(u_1^2,\dots, u_N^2)\]
\[ t \mapsto \sum_{j=1}^N u_j \,.\]
If $(\fd, H_\fd)$ is one of the rays of 
$\fD$, we can write, after the change of variables $t=\sum_{j=1}^N u_j$:
\[ H_\fd=\sum_{k} H_{\fd,k} \,,\]
where each $H_{\fd,k}$ is proportional to a monomial in the variables $u_1,\dots,u_N$.
We can think about the ray $(\fd, H_\fd)$ as being the superposition of rays $(\fd,H_{\fd,k})$. By generic perturbations transverse to their directions, we can separate these rays.
We do such splitting for all the rays of 
$\fD$. 
When two of the perturbed rays meet we are in the situation of elementary scattering, with propagation of the two ingoing rays and emission of a new outgoing ray. 
We iterate the construction until we get a consistent picture. One can show that if the initial perturbations are generic enough, then all the local computations are elementary scatterings. We recover $S(\fD)$ by putting back together all the parallel rays.

When working with perturbed rays, sequences of elementary scatterings producing outgoing rays define balanced graphs in $\R^2$, i.e.\ tropical curves. It is a key point: the combinatorics of the computation of the consistent completion 
$\fD \mapsto S(\fD)$ of a local scattering diagram is the combinatorics of tropical curves in $\R^2$. This is the ultimate explanation for the connection between local scattering diagrams and curve counting. 

\subsection{Curve counting from local scattering diagrams}
In order to obtain a connection with Gromov--Witten theory, we need to specialize the general discussion of local scattering diagrams done previously. We make a particular choice of Lie algebra: we take 
$\fg=\C[M]$, with linear basis given by monomials $z^m$, $m\in M=\Z^2$, and with Lie bracket given by 
\[ [z^{m_1},z^{m_2}]=\det(m_1,m_2)z^{m_1+m_2}\,.\]
Conceptually, viewing $\C[M]$ as the algebra of functions on $(\C^{*})^2$, $[-,-]$ is the Poisson bracket defined by the holomorphic symplectic form $\frac{dx}{x} \wedge \frac{dy}{y}$.

We take $R=\C[\![t]\!]$. Concretely, we apply the formalism of scattering diagrams with $R=\C[t]/t^N$ and we take the limit $N \rightarrow +\infty$. We choose
primitive elements $m_1$ and 
$m_2$ of $M$.
Let $\fD_{m_1,m_2}$ be the local scattering diagram consisting of two ingoing rays 
$(\R_{\geqslant 0}m_1, H_1)$ and 
$(\R_{\geqslant 0}m_2,H_2)$, where 
\[ H_1 \coloneq \sum_{k\geqslant 1}
\frac{(-1)^{k-1}}{k^2} z^{km_1} t^k \]
and
\[ H_2 \coloneq \sum_{k\geqslant 1}
\frac{(-1)^{k-1}}{k^2}z^{km_2} t^k \,.\]
Let $S(\fD_{m_1,m_2})$ be the consistent completion of $\fD_{m_1,m_2}$.
For every $a,b \in \NN$ coprime, let 
$(\fd_{a,b}=-\R_{\geqslant 0}(am_1+bm_2),H_{a,b})$ be the outgoing ray 
of direction $-(am_1+bm_2)$ in $S(\fD_{m_1,m_2})$.

Gross--Pandharipande--Siebert \cite{MR2667135}
have given a Gromov--Witten interpretation of the generating series $H_{a,b}$ computed 
by the local scattering diagram $S(\fD_{m_1,m_2})$. 

Let $Y_{m_1,m_2}^{a,b}$ be the projective toric surface of fan given by the three rays 
$\R_{\geqslant 0}m_1$, $\R_{\geqslant 0}m_2$, and $-\R_{\geqslant 0} (am_1+bm_2)$.
Let $D_1$, $D_2$ and $D_{a,b}$ be the corresponding toric divisors.
Let $X_{m_1,m_2}^{a,b}$ be the projective surface obtained by blowing-up one point on $D_1$ away from $D_1 \cap D_2$ and 
$D_1 \cap D_{a,b}$, and one point on $D_2$ away from $D_2 \cap D_1$ and $D_2 \cap D_{a,b}$. We denote by $E_1$ and $E_2$ the corresponding exceptional divisors. We still denote by $D_1$, $D_2$ and 
$D_{a,b}$ the strict transforms
in $X_{m_1,m_2}^{a,b}$ of 
$D_1$, $D_2$, and $D_{a,b}$.

For every positive integer $k$, there exists a unique class 
$\beta_k \in H_2(X_{m_1,m_2}^{a,b},\Z)$
such that $\beta_k \cdot E_1=ka$, 
$\beta_k \cdot E_2=kb$, and 
$\beta_k \cdot D_{a,b}=k$. 
Let 
$N_{m_1,m_2}^{ka,kb}$ be the Gromov--Witten count of rational curves in 
$X_{m_1,m_2}^{a,b}$ of class $\beta_k$
intersecting $D_{a,b}$ at a single point.
One precise way to define 
$N_{m_1,m_2}^{ka,kb}$ is to use log Gromov--Witten theory of 
$X_{m_1,m_2}^{a,b}$ relatively to the 
divisor 
$D_1 \cup D_2 \cup D_{a,b}$.

It seems that we are using a different surface $X_{m_1,m_2}^{a,b}$ for each choice of $a$ and $b$. In fact, we can replace 
$Y_{m_1,m_2}^{a,b}$ by any projective toric surface whose fan contains the rays $\R_{\geqslant 0}m_1$ and
$\R_{\geqslant 0}m_2$, and then, for every 
$a$ and $b$, we can interpret $(ka,kb)$ as a well-defined relative condition in log Gromov--Witten theory and define the log Gromov--Witten invariants $N_{m_1,m_2}^{ka,kb}$. 
The log Gromov--Witten invariants are independent of the precise choice of toric surface by invariance of log Gromov--Witten invariants under log birational modifications \cite{abramovich2013invariance}.

The main result of Gross--Pandharipande--Siebert \cite{MR2667135} is then:

\begin{thm} \label{thm_gps}
For every $a,b \in \NN$ coprime, the generating series $H_{a,b}$ attached to the ray of direction $-(am_1+bm_2)$ in the local scattering diagram $S(\fD_{m_1,m_2})$ is given by 
\[ H_{a,b}=\sum_{k \geqslant 1} 
N_{m_1,m_2}^{ka,kb} z^{k(am_1+bm_2)} t^{k(a+b)} \,.\]
\end{thm}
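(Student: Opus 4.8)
The plan is to compute the consistent completion $S(\fD_{m_1,m_2})$ explicitly via the perturbation trick described above, and then to match the resulting combinatorial expression for $H_{a,b}$ with the log Gromov--Witten side through a correspondence theorem.

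First I would carry out the scattering computation. Working modulo $t^{N+1}$ and embedding $\C[t]/t^{N+1} \hookrightarrow \C[u_1,\dots,u_N]/(u_1^2,\dots,u_N^2)$ via $t \mapsto \sum_j u_j$, I expand each initial Hamiltonian: the term $\frac{(-1)^{k-1}}{k^2} z^{k m_i} t^k$ becomes a sum of monomials proportional to $u_{j_1}\cdots u_{j_k} z^{k m_i}$. Each such monomial defines a single ray, and by generic transverse perturbations I separate all of them. By Theorem \ref{KS_lemma} and the inductive elementary-scattering computation in its proof, every crossing of two rays carrying $g \in \fg_{m}$ and $g' \in \fg_{m'}$ emits the outgoing ray in direction $-(m+m')$ with Hamiltonian $[g,g'] = \det(m,m')\, z^{m+m'}\cdot(\text{coefficients})$. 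Iterating these elementary scatterings organizes the contributions to the ray $\fd_{a,b}$ as a sum over rational trivalent tropical curves $\Gamma$ in $\R^2$ with unbounded ends in the directions $m_1$, $m_2$ and $am_1+bm_2$; each $\Gamma$ is weighted by the product over its internal vertices of the determinants $\det(m,m')$ and the product over its ends on the $m_1$- and $m_2$-rays of the leaf-coefficients $\frac{(-1)^{k-1}}{k^2}$. This yields $H_{a,b} = \sum_k N_k^{\mathrm{trop}}\, z^{k(am_1+bm_2)} t^{k(a+b)}$ for an explicit tropical count $N_k^{\mathrm{trop}}$.

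Next I would identify $N_k^{\mathrm{trop}}$ with $N_{m_1,m_2}^{ka,kb}$. I would choose a toric degeneration of $X_{m_1,m_2}^{a,b}$ whose central fibre has dual intersection complex realizing the above scattering picture in $\R^2$, with the two blown-up points producing the ingoing rays along $\R_{\geqslant 0}m_1$ and $\R_{\geqslant 0}m_2$ and the divisor $D_{a,b}$ giving the direction $-(am_1+bm_2)$. The unbounded ends of $\Gamma$ encode the intersection numbers with $E_1$, $E_2$ and $D_{a,b}$, hence pin down the class $\beta_k$ and the single contact point with $D_{a,b}$. Applying the degeneration formula for log Gromov--Witten invariants then expresses $N_{m_1,m_2}^{ka,kb}$ as a sum over the same combinatorial types of tropical curves, with each summand a product of local vertex and leg invariants.

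The remaining and decisive step is to match multiplicities term by term. At each trivalent vertex the standard toric correspondence identifies the local log contribution with the tropical factor $\det(m,m')$; this part is routine. The genuinely delicate point is the leg contribution: I must prove that the coefficient $\frac{(-1)^{k-1}}{k^2}$ in $H_i$ is exactly the local genus-$0$ log Gromov--Witten invariant of a weight-$k$ multiple cover of the exceptional $(-1)$-curve $E_i$ meeting the boundary at one point with contact order $k$. In other words, these leaf-coefficients must be recognized as the universal multiple-cover contribution of a $(-1)$-curve in a log Calabi--Yau surface. I expect this --- together with showing that the degeneration formula glues the vertex and leg factors into the global invariant with no residual corrections from contracted components --- to be the main obstacle, since it requires rigidifying the relevant log moduli spaces and a careful local analysis near the boundary; the global balancing and the independence of the auxiliary toric surface then follow from log birational invariance.
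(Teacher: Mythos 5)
Your proposal is correct and follows essentially the same route as the paper's (and Gross--Pandharipande--Siebert's) proof: the perturbation trick reducing the scattering computation to elementary scatterings organized by tropical curves, the Mikhalkin/Nishinou--Siebert tropical correspondence for the toric surface, and a degeneration sending the blown-up points to the toric boundary combined with the degeneration formula. In particular, your ``decisive step'' of identifying the leaf coefficients $\frac{(-1)^{k-1}}{k^2}$ as the contact-order-$k$ multiple-cover contributions of the exceptional $(-1)$-curves is exactly the step the paper describes, where these factors arise as relative Gromov--Witten invariants of $\PP^1$ in the degeneration argument.
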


\begin{proof}
We present a sketch of the proof given in \cite{MR2667135}.
After sending the blown-up points ``at infinity"
by a degeneration, the computation of the log Gromov--Witten invariants $N_{m_1,m_2}^{ka,kb}$ of 
$X_{m_1,m_2}^{a,b}$ can be reduced to the
computation of log Gromov--Witten invariants of $Y_{m_1,m_2}^{a,b}$ with contact conditions along $D_1$, $D_2$, and with only a single intersection point with $D_{a,b}$. 
The factors $\frac{(-1)^{k_1}}{k^2}$
in $H_1$ and $H_2$ come from relative Gromov--Witten invariants of $\PP^1$ appearing in the degeneration argument.

The log Gromov--Witten invariants of the toric surface 
$Y_{m_1,m_2}^{a,b}$ can be computed in terms of enumeration of tropical curves in $\R^2$
\cite{MR2137980, MR2259922}: one constructs by toric means appropriate normal crossing degenerations of 
$Y_{m_1,_2}^{a,b}$ and the tropical curves appear as dual intersection graphs of the degenerated curves in the special fiber.

It remains to use the correspondence between scattering diagrams and tropical curves sketched as the end of  \cref{section_local_scattering}.
\end{proof}

\subsection{Scattering diagrams}
\label{section_scattering_diagrams}

The divisor 
\[D_1 \cup D_2 \cup D_{a,b}\] 
is anticanonical on the surface $X_{m_1,m_2}^{a,b}$.
In other words, the pair 
\[(X_{m_1,m_2}^{a,b}, D_1 \cup D_2 \cup D_{a,b})\] is a log Calabi-Yau surface. Theorem \ref{thm_gps} computes a class of log Gromov--Witten invariants of $(X_{m_1,m_2}^{a,b}, D_1 \cup D_2 \cup D_{a,b})$ in terms of a local scattering diagram. More generally, for every 
log Calabi-Yau surface $(Y,D)$ with $D$ a cycle of
rational curves, there is a version of Theorem \ref{thm_gps} computing log Gromov--Witten invariants for rational curves intersecting $D$ at a single point in terms of a local scattering diagram.

We are interested in log Gromov--Witten invariants for rational curves in 
$(\PP^2,E)$ intersecting $E$ at a single point. The pair $(\PP^2,E)$ is a log Calabi-Yau surface but $E$ is a smooth genus-$1$ curve and not a cycle of rational curves.
In particular, we cannot use a local scattering diagram to compute the 
invariants $\Omega_{d,k}^{\PP^2/E}$.
The invariants $\Omega_{d,k}^{\PP^2/E}$ will be computed using a (global, not local) scattering diagram constructed from a normal crossing degeneration of $(\PP^2,E)$ \cite{gabele2019tropical}. 

Let $B_0$ be an integral affine manifold. A \emph{scattering diagram} $\fD$ on $B_0$ is a collection
of rays $(\fd, H_{\fd})$ on $B_0$ such that, locally
near each point $b \in B_0$, we see a local scattering diagram $\fD_b$ in the sense of \ref{section_local_scattering}.
We refer to \cite{MR2846484} and \cite{bousseau2019scattering}
for more precise definitions, dealing in particular with convergence issues. A scattering diagram $\fD$ is said to be \emph{consistent} if all the local scattering diagrams 
$\fD_b$ are consistent in the sense of \cref{section_local_scattering}. Given a scattering diagram 
$\fD$ on $B_0$, there is a canonical way to produce a consistent scattering diagram $S(\fD)$. When some rays 
intersect at some point, we apply Theorem \ref{KS_lemma}
and add some new rays to guarantee local consistency around this point. Then, we propagate the new rays and we iterate the construction.

\section{Scattering diagrams as a bridge between sheaves and curves}

\subsection{Scattering diagram from relative Gromov--Witten theory}
\label{section_scattering_gw}
In this section, we follow the work of Gabele \cite{gabele2019tropical}.
Let 
\[ \cX \coloneq \{ ([x:y:z:w], t) \in \PP(1,1,1,3) \times 
\A^1 |\, xyz-t^3(x^3+y^3+z^3+w)=0\}\,.\]
Denote by $\cX_t$ the fiber over $t \in \A^1$. The hypersurface
$\cX_t$ in $\PP(1,1,1,3)$ intersects the toric boundary divisor
$\PP^2=\{ w=0\}$ along the cubic curve 
\[ E_t: xyz-t^3(x^3+y^3+z^3)=0\,.\] 
The special fiber $\cX_0$ breaks into the union of the three other toric divisors 
$\{x=0\}$, $\{y=0\}$, $\{z=0\}$, each one being isomorphic to the weighted projective plane 
$\PP(1,1,3)$. The cubic $E_0$
breaks into a triangle of lines.

Let $(B, \mathscr{P})$ be the dual intersection complex of 
$\cX_0$. The polyhedral decomposition 
$\cP$ contains 3 vertices $v_1$, $v_2$, $v_3$, dual to the 3 components of $\cX_0$, defining a triangle $T$ dual to the triple intersection point of the 3 components of $\cX_0$. 
As each irreducible component of $\cX_0$ is toric, there is a natural way to define an integral affine structure on the complement $B_0$ in $B$ of 3 focus-focus singularities $x_1$, $x_2$, $x_3$. 
The 3 singularities of the affine structure are related to the fact that the total space of $\cX$ has 3 nodal points and that the family $\cX \rightarrow \A^1$
is not log smooth at these points.

We define a scattering diagram $\fD_{\PP^2/E}$ on $B_0$
consisting of 6 rays emanating from the 3 singularities 
in the monodromy invariant directions defined by the edges of $T$, and with attached functions
\[ H = \sum_{k\geqslant 1}
\frac{(-1)^{k-1}}{k^2} z^{km}\,, \]
where $m$ is the direction of the ray pointing towards the singularity. Let $S(\fD_{\PP^2/E})$ be the consistent scattering diagram on $B_0$ obtained by consistent completion of $\fD_{\PP^2/E}$.

Figure: $(B, \mathscr{P})$. The two unbounded half-lines meeting at each singularity $x_i$ need to be identified and the affine structure needs to be glued across this identification by an explicit transformation in $SL(2,\Z)$.
\begin{center}
\setlength{\unitlength}{1.2cm}
\begin{picture}(10,5)
\thicklines
\put(4,1.5){\circle*{0.1}}
\put(4,1.3){$v_3$}
\put(5,3.5){\circle*{0.1}}
\put(5.1,3.55){$v_2$}
\put(6,2.5){\circle*{0.1}}
\put(6,2.3){$v_1$}
\put(5.5,3){\circle*{0.1}}
\put(5.6,3.1){$x_1$}
\put(5.5,3){\line(1,0){2.5}}
\put(5.5,3){\line(0,1){2}}
\put(5,2){\circle*{0.1}}
\put(5,1.8){$x_3$}
\put(5,2){\line(1,0){3}}
\put(5,2){\line(-1,-1){2}}
\put(4.5,2.5){\circle*{0.1}}
\put(4.1,2.5){$x_2$}
\put(4.5,2.5){\line(-1,-1){2}}
\put(4.5,2.5){\line(0,1){2.5}}
\put(4,1.5){\line(-1,-1){1.5}}
\put(5,3.5){\line(0,1){1.5}}
\put(6,2.5){\line(1,0){2}}
\put(4,1.5){\line(2,1){2}}
\put(4,1.5){\line(1,2){1}}
\put(5,3.5){\line(1,-1){1}}
\put(5,2.5){$T$}
\end{picture}
\end{center}

 Let $\widetilde{B-T}$ be the universal cover of the complement in $B$
of the triangle $T$. One can identify $\widetilde{B-T}$
(modulo a rescaling by $3$ of $y$) as integral affine manifold with an open subset of 
\[ U \coloneq 
\{  (x,y) \in \R^2 |\, y > -\frac{x^2}{2}\}\,.\] 
More precisely, $\widetilde{B-T}$ is identified with 
\[ \{ (x,y)\in \R^2 |\, y>f(x) \}\,,\]
where $f(x)$ is a continuous piecewise linear function
approximating $-\frac{x^2}{2}$.
The singularities 
$x_1,x_2,x_3$ on the boundary of $B-T$ lift to the points 
$(n,-\frac{n^2}{2})$, $n \in \Z$, all on the boundary of $U$
given by the parabola of equation 
$y=-\frac{x^2}{2}$. The monodromy invariants directions at the singularities lift to the tangent lines to the parabola at the points $(n,-\frac{n^2}{2})$. One can show that the rays of the scattering diagram
$S(\fD_{\PP^2/E})$ never enter the interior of the triangle 
$T$. Thus, we can consider the lift 
$\tilde{S}(\fD_{\PP^2/E})$ of $S(\fD_{\PP^2/E})$ to $U$,
see Figure \ref{figure_scattering}.

We claim that the scattering diagram $\tilde{S}(\fD_{\PP^2/E})$ computes the Gromov--Witten invariants $N_{0,d}^{\PP^2/E,k}$ introduced in
\cref{section_scattering_gw}. 

We can show that 
vertical asymptotic rays in $\tilde{S}(\fD_{\PP^2/E})$
are contained in vertical lines of equation $x=x_0$ with $x_0 \in \Q$. Given $x_0 \in \Q$, we denote by $[x_0]$ its image in 
$\Q/3\Z$.
For every $G$ an abelian group and $x$ an element of $G$ of finite order divisible by $3$, we denote by $d(x)$ the smallest positive integer such that 
$(3d(x))x=0$ in $G$.
For every $\ell \in \Z_{\geqslant 1}$, we denote by $r_\ell$ the number of elements $x \in \Z/(3\ell)$ such that $d(x)=\ell$.
For every $k, \ell \in \Z_{\geqslant 1}$, we denote by
$s_{k,\ell}$ the number of $x=(a,b)
\in \Z/(3k) \times \Z/(3k)$
such that 
$d(x)=k$ and $d(a)=\ell$.

\begin{thm}\cite{gabele2019tropical} \label{tropical_correspondence}
The function attached to an asymptotic vertical ray in 
$\tilde{S}(\fD_{\PP^2/E})$ of equation $x=x_0$ with $d([x_0])=\ell$ is 
\[ H_\ell = \sum_{\substack{k \geqslant 1 
\\\ell |k}} 
\sum_{\substack{d \geqslant 1 
\\k |d}} \frac{s_{k\ell}}{r_\ell} 
N_{0,d}^{\PP^2/E,k} z^{(0,d)}\,.\]
\end{thm}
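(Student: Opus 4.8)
The plan is to read off $H_\ell$ from a count of tropical curves in $U$, to match this count with log Gromov--Witten invariants of the degeneration $\cX \to \A^1$, and then to track how the torsion data indexing the contact point $p$ is encoded in the vertical position $x_0$. First I would apply the scattering--tropical dictionary sketched at the end of \cref{section_local_scattering}: after the perturbation trick the consistent completion $\tilde{S}(\fD_{\PP^2/E})$ is built out of elementary scatterings, so the function attached to any outgoing ray is a generating series of balanced trivalent graphs in $U$ whose unbounded ingoing legs lie in the initial directions---the parabola tangents at the lifted singularities $(n,-\tfrac{n^2}{2})$, carrying the functions $H$---and whose single outgoing leg is the ray in question. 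For a vertical ray of equation $x=x_0$ the outgoing leg points in the direction $(0,-1)$, so the monomials are exactly $z^{(0,d)}$ and the coefficient of $z^{(0,d)}$ is a weighted count of such tropical curves of total vertical weight $d$, each contributing its Gross--Pandharipande--Siebert multiplicity together with the $\tfrac{(-1)^{k-1}}{k^2}$ factors carried by the legs.

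Next I would invoke a tropical/log correspondence. Since $(B,\mathscr{P})$ is the dual intersection complex of the central fibre $\cX_0$, the tropical curves produced by the scattering are the dual graphs of genus-$0$ stable log maps in $\cX_0$ meeting the boundary cycle at a single point, and the leg weights $\tfrac{(-1)^{k-1}}{k^2}$ are precisely the multiple-cover contributions supplied by Theorem \ref{thm_gps}. By deformation invariance of log Gromov--Witten invariants these invariants of $\cX_0$ agree with the invariants $N_{0,d}^{\PP^2/E}$ of the general fibre, split according to the contact point: a tropical curve with outgoing vertical leg at $x=x_0$ contributes to the component indexed by the point $p \in P_d$ whose torsion data $p-p_0 \in \Pic^0(E)$, viewed in $(\Z/3d)^2$, has its first coordinate prescribed by $[x_0] \in \Q/3\Z$, and a curve realizing $d(p)=k$ contributes $N_{0,d}^{\PP^2/E,k}$.

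It then remains to carry out the combinatorial bookkeeping producing the coefficients $\tfrac{s_{k,\ell}}{r_\ell}$. By Lemma \ref{lem_monodromy} the invariant $N_{0,d}^{\PP^2/E,k}$ depends on $p$ only through $k=d(p)$, so I would group the torsion points $(a,b)\in(\Z/3k)^2$ with $d((a,b))=k$ according to their first coordinate $a$. A short computation shows that the number of admissible first coordinates $a$ with $d(a)=\ell$ equals $r_\ell$ and that each such $a$ completes to the same number $s_{k,\ell}/r_\ell$ of pairs $(a,b)$ with $d((a,b))=k$; since these $r_\ell$ values of $a$ are in bijection with the vertical positions $x_0$ satisfying $d([x_0])=\ell$, each such position receives $s_{k,\ell}/r_\ell$ contributions of $N_{0,d}^{\PP^2/E,k}$. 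Summing over the admissible $k$ (with $\ell \mid k$) and $d$ (with $k \mid d$) then yields the asserted formula for $H_\ell$.

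The hardest step is the tropical/log correspondence of the second paragraph. In contrast with the Gross--Pandharipande--Siebert setting of Theorem \ref{thm_gps}, where the boundary is a cycle of rational curves and the degenerating family is log smooth, the family $\cX \to \A^1$ fails to be log smooth at its three nodal points, which are exactly the focus--focus singularities $x_1,x_2,x_3$ and the source of the nontrivial monodromy of the affine structure on $B_0$. Showing that the scattering functions transported across these singularities faithfully compute the log Gromov--Witten invariants, and that deformation invariance correctly relates the singular central fibre to the smooth pair $(\PP^2,E)$, is the technical core of Gabele's argument and the place where genuinely new input beyond Theorem \ref{thm_gps} is required.
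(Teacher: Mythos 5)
Your proposal is correct and follows essentially the same route as the paper's sketch: deformation invariance to the special fiber $\cX_0$, decomposition of the invariants into tropical curves on $B_0$ whose local contributions are evaluated by the Gross--Pandharipande--Siebert multiplicities of Theorem \ref{thm_gps} (your scattering-side perturbation picture is this same mechanism read in the opposite direction), and the torsion-packet bookkeeping via Lemma \ref{lem_monodromy} producing the factor $s_{k,\ell}/r_\ell$. Your ``short computation'' is indeed short---since $d((a,b))=\operatorname{lcm}(d(a),d(b))$, the number of completions $b$ of a first coordinate $a$ depends only on $\ell=d(a)$ and $k$---and you correctly isolate, as the paper does, the failure of log smoothness at the three nodal points as the technical core requiring input beyond Theorem \ref{thm_gps}.
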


Theorem \ref{tropical_correspondence} is proved by Gabele 
\cite{gabele2019tropical}. It is a general expectation in the Gross-Siebert approach to mirror symmetry that scattering diagrams should encode enumeration of holomorphic disks \cite{MR2846484, cps}.

\begin{proof}
We give a sketch of proof of Theorem \ref{tropical_correspondence}. We are interested in the log Gromov--Witten invariants $N_{0,d}^{\PP^2/E,k}$ of 
$(\PP^2,E)$. By deformation invariance of log Gromov--Witten theory, we can compute the invariants $N_{0,d}^{\PP^2/E,k}$ of $(\PP^2,E)$ on the special fiber $\cX_0$. According to the decomposition formula of \cite{abramovich2017decomposition}, we can decompose 
$N_{0,d}^{\PP^2/E,k}$ into pieces indexed by rigid tropical curves in $B_0$. 

One important point is that we can follow the torsion points of $E$ is the degeneration and in the tropicalization. Indeed, for every positive integer $n$, up to doing a base change and some blow-ups, we can consider a new degeneration where the elliptic curve breaks into a cycle of $n$ rational components, and such that the $n$-torsion points are monodromy invariant. The $n^2$ $n$-torsion points degenerate into $n$ points on each of the $n$ components of the cycle. 
Tropically, the family of elliptic curves defines the circle ``at infinity of $B$" and the $n$ components of the cycle correspond to the $n$ $n$-torsion points of this circle.
Thus, we cannot distinguish tropically the $n^2$ $n$-torsion points of the elliptic curve, but we can see $n$ packets of $n$-torsion points and it is enough for us. Indeed, we already know by Lemma \ref{lem_monodromy}
that the invariants $N_{0,d}^{\PP^2/E,p}$
depends on $p$ only through $d(p)$ and so we do not have 
$(3d)^2$ but only $3d$ unknowns. The factor $\frac{s_{k\ell}}{r_\ell}$ in Theorem \ref{tropical_correspondence} comes from the comparison 
between torsion points of the elliptic curves and torsion points of the tropical elliptic curve.

In order to evaluate the contribution of a tropical curve to $N_{0,d}^{\PP^2/E,k}$, we include this tropical curve in a refinement of the polyhedral decomposition $\cP$. This defines a new family in which the components of the stable log maps of interest maps transversely into the components of the new special fiber. Each component is a toric surface
and so it follows from Theorem \ref{thm_gps} that the local scatterings in $S(\fD_{\PP^2/E})$ correspond to counts of rational curves in the toric components of the special fiber. It remains to glue these local contributions to conclude.
\end{proof}

According to Theorem \ref{tropical_correspondence}, 
the Gromov--Witten invariants 
$N_{0,d}^{\PP^2/E,k}$, or equivalently the BPS 
invariants $\Omega_{d,k}^{\PP^2/E}$, can be computed from the scattering diagram $\tilde{S}(\fD_{\PP^2/E})$, which has a purely algebraic/algorithmic definition. Thus, one can translate Theorem \ref{thm_gw-bps} into a purely algebraic statement about $\tilde{S}(\fD_{\PP^2/E})$. One might hope to give a purely algebraic proof of this statement. Unfortunately, such a proof is not known: 
$\tilde{S}(\fD_{\PP^2/E})$ is a quite complicated object, see Figure \ref{figure_scattering}. 
To make progress, we need to come back to geometry
(but not the same geometry we started with...).
In the next sections, we explain how $\tilde{S}(\fD_{\PP^2})$
appears in the context of stability conditions on 
$\D^b(\PP^2)$ and how Theorem \ref{thm_gw-bps} translates into Theorem \ref{thm_sheaves}, thus proving Theorem
\ref{thm_main}.

\subsection{Scattering diagram from stability conditions}
\label{section_scattering_stability}

The main idea is to embed $U$ in the space 
$\Stab \D^b(\PP^2)$ of Bridgeland stability conditions on the derived category $\D^b(\PP^2)$
of coherent sheaves on $\PP^2$
and to give a description of the scattering diagram 
$\tilde{S}(\fD_{\PP^2/E})$.

We have 
\[ K_0(\PP^2) \simeq \Gamma \coloneq \Z^3\]
\[ [F] \mapsto \gamma(F)
=(r(F),d(F),\chi(F))\,,\]
where $r$ is the rank, $d$ is the degree and $\chi$ the Euler characteristic.
Recall that a Bridgeland stability condition
\cite{MR2373143}
on $\D^b(\PP^2)$
is a pair $\sigma=(\cA,Z)$, where
$\cA \subset \D^b(\PP^2)$
is an abelian category, heart of a bounded t-structure on $\D^b(\PP^2)$, and 
\[ Z \colon \Gamma \rightarrow \C \]
\[ \gamma \mapsto Z_\gamma\]
is a linear map, called the central charge, such that:
\begin{enumerate}
\item 
For every object $F \neq 0$ in $\cA$, we have 
$Z_{\gamma(F)} \in \{ z \in \C |\,\Ima z>0\} \cup \R_{<0}$. 
Thus, for every $F \neq 0$ in $\cA$, we can define 
$\phi(F) \coloneq \frac{1}{\pi} 
\Arg Z_{\gamma(F)} \in (0,1]$. 
We say that an object $F \neq 0$ in $\cA$ is 
$\sigma$-semistable if for every $F' \neq 0$
subobject of $F$ in $\cA$, we have 
$\phi(F') \leqslant \phi(F)$.
\item Every  object $F \neq 0$ in $\cA$ admits a Harder-Narasimhan filtration \[0=F_0 \subset F_1
\subset \cdots \subset F_n=F\]
in $\cA$,
whose factors $G_i\coloneq F_i/F_{i-1}$ are $\sigma$-semistable objects in $\cA$ with 
\[\phi(F_1)>\phi(F_2)>\cdots>\phi(F_n)\,.\]
\item Support property:
there exists a quadratic form $Q$ on the $\R$-vector space $\Gamma \otimes \R$
    such that the kernel of $Z$ in $\Gamma \otimes \R$ is negative
        definite with respect to $Q$,
and for every $\sigma$-semistable object $F$, we have $Q(\gamma(F)) \geqslant 0$.
\end{enumerate}
According to \cite{MR2373143}, the 
space $\Stab \D^b(\PP^2)$ of Bridgeland stability conditions on 
$\D^b(\PP^2)$ has a natural structure of complex manifold of complex dimension $3$.

\begin{lem}\cite{bousseau2019scattering}
\label{lem_stability}
There exists an embedding 
\[ U \hookrightarrow \Stab \D^b(\PP^2) \]
\[ (x,y) \mapsto \sigma^{(x,y)}
=(\cA^{(x,y)}, Z^{(x,y)})\]
such that, for every $\gamma
=(r,d,\chi) \in \Gamma$, we have 
\[ Z_\gamma^{(x,y)}
=ry+dx+r+\frac{3d}{2}-\chi+i(d-rx)
\sqrt{x^2+2y} \,.\]
\end{lem}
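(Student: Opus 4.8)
The plan is to realize the family $\sigma^{(x,y)}$ as the classical family of \emph{geometric} stability conditions on $\D^b(\PP^2)$ obtained by tilting, as constructed for surfaces by Arcara--Bertram and Bayer--Macrì and worked out for $\PP^2$ by Li and others, and then to read off the stated central charge through an explicit change of variables combined with Riemann--Roch. For real parameters $s$ and $t>0$, one tilts $\Coh(\PP^2)$ at Mumford slope $s$: writing $\cT_s$ for the extension-closed subcategory generated by all torsion sheaves and by slope-semistable sheaves of slope $>s$, and $\cF_s$ for the one generated by slope-semistable sheaves of slope $\leqslant s$, the pair $(\cT_s,\cF_s)$ is a torsion pair and $\cA_s\coloneq\langle\cF_s[1],\cT_s\rangle$ is the heart of a bounded t-structure. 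The companion central charge is the twisted one,
\[ Z_{s,t}(F)=-\ch_2(F)+s\,\ch_1(F)-\tfrac{s^2}{2}\,\ch_0(F)+\tfrac{t^2}{2}\,\ch_0(F)+i\,t\,(\ch_1(F)-s\,\ch_0(F))\,, \]
where $\ch(F)=(\ch_0,\ch_1,\ch_2)$ is the Chern character and $\ch_0=r$, $\ch_1=d$.

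The next step is the coordinate dictionary. Comparing the imaginary part $t(\ch_1-s\,\ch_0)=t(d-rs)$ with the prescribed $(d-rx)\sqrt{x^2+2y}$ forces
\[ s=x,\qquad t=\sqrt{x^2+2y}\,, \]
so that $y=\tfrac12(t^2-s^2)$ and the constraint $t>0$ is precisely $x^2+2y>0$, i.e.\ $(x,y)\in U$. Under this substitution the $s^2$- and $t^2$-terms combine to $ry$, so the real part of $Z_{s,t}$ equals $-\ch_2+dx+ry$; and Riemann--Roch on $\PP^2$, using $\td(\PP^2)=1+\tfrac32 H+H^2$, gives $\chi(F)=\ch_2(F)+\tfrac32 d+r$, hence $\ch_2=\chi-\tfrac32 d-r$. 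Substituting turns the real part into $ry+dx+r+\tfrac32 d-\chi$, which matches the lemma verbatim. Thus $\sigma^{(x,y)}\coloneq\sigma_{x,\sqrt{x^2+2y}}$ has exactly the asserted central charge.

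It remains to confirm that $\sigma^{(x,y)}$ is a genuine stability condition and that the assignment is an embedding. Positivity (axiom (1)) is the defining feature of the tilt: $\Ima Z_{s,t}\geqslant 0$ on the generators of $\cT_s$ and of $\cF_s[1]$, with the vanishing-imaginary locus mapping into $\R_{<0}$; Harder--Narasimhan filtrations (axiom (2)) follow from noetherianity of $\cA_s$ and discreteness of $\Ima Z$. The one substantive input---and the step I expect to be the main obstacle---is the support property (axiom (3)): this is exactly where the Bogomolov--Gieseker inequality $\ch_1^2\geqslant 2\,\ch_0\,\ch_2$ for slope-semistable sheaves on $\PP^2$ enters, yielding a quadratic form $Q$ negative definite on $\ker Z$ and nonnegative on all semistable classes. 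Since this is established in the cited constructions, in this survey I would invoke it rather than reprove it. Finally, by Bridgeland's theorem the forgetful map $\Stab\D^b(\PP^2)\to\Hom(\Gamma,\C)$, $\sigma\mapsto Z$, is a local homeomorphism; as $Z_{s,t}$ recovers $(s,t)$ from its real and imaginary parts and depends real-analytically and injectively on them, and as $(x,y)\mapsto(x,\sqrt{x^2+2y})$ is a diffeomorphism of $U$ onto $\{t>0\}$, the composite $(x,y)\mapsto\sigma^{(x,y)}$ is a continuous injection that is a homeomorphism onto its image, i.e.\ the desired embedding.
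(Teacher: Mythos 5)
Your proposal is correct and follows essentially the same route as the paper: both realize $\sigma^{(x,y)}$ as the standard geometric (tilted) stability conditions of Bridgeland, Arcara--Bertram and Bayer--Macr\`\i{} with central charge $Z^{(s,t)}_{\gamma(F)}=-\int_{\PP^2}e^{-(s+it)H}\ch(F)$, followed by the quadratic change of variables $s=x$, $t=\sqrt{x^2+2y}$ (equivalently $y=-\tfrac{1}{2}(s^2-t^2)$), with existence of that family cited rather than reproved. Your additional verifications---the Riemann--Roch computation $\ch_2=\chi-\tfrac{3}{2}d-r$ matching the real part, the identification of $t>0$ with $(x,y)\in U$, and the embedding property via the local homeomorphism $\Stab\D^b(\PP^2)\to\Hom(\Gamma,\C)$---are all sound and merely make explicit what the paper leaves to the references.
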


\begin{proof}
According to 
\cite{MR2376815, MR2998828, MR2852118},
there exists an embedding 
\[ \mathbb{H}
\coloneq 
\{ (s,t)\in \R^2|\, t>0\} \hookrightarrow \Stab \D^b(\PP^2)\]
\[ (s,t) \mapsto \sigma^{(s,t)}
=(\cA^{(s,t)},Z^{(s,t)})\]
such that
\[Z_{\gamma(E)}^{(s,t)}=-\int_{\PP^2}
e^{-(s+it)H} \ch(E)\,,\]
where $H \coloneq c_1(\cO(1))$.
We obtain the desired embedding via 
the quadratic change of variables 
\[ \mathbb{H} \rightarrow U \]
\[(s,t) \mapsto  (x,y)=\left(s,-\frac{1}{2}(s^2-t^2)\right)\,.\]
\end{proof}

From now on, we use Lemma 
\ref{lem_stability} to view $U$ as a subset of $\Stab \D^b(\PP^2)$. 

For every $\sigma \in U$ and 
$\gamma \in \Gamma$, we have a moduli space 
$M_\gamma^\sigma$ parametrizing $S$-equivalence classes of $\sigma$-semistable objects $F$ with $\gamma(F)=\gamma$.
Given $\gamma \in \Gamma$, there are finitely many real codimension $1$ loci in $U$, called walls, in the complement of which $M_\gamma^\sigma$ is a
constant function of $\sigma$, and across which $M_\gamma^\sigma$ jumps.
Given $\gamma \in \Gamma$ and 
$x \in \R$, we can show that, for every 
$y \in \R_{>0}$ large enough, the moduli space $M_\gamma^{\sigma=(x,y)}$ coincides with the moduli space of Gieseker semistable sheaves of class $\gamma$.

For every $\sigma \in U$
and $\gamma \in \Gamma$, we denote by 
$Ie(M_\gamma^\sigma)$ the Euler characteristic of $M_\gamma^\sigma$ for the 
intersection cohomology and we define
\[ \Omega_\gamma^\sigma 
\coloneq (-1)^{\dim M_\gamma^\sigma} 
Ie(M_\gamma^\sigma) \in \Z \,.\]
The invariants $\Omega_\gamma^\sigma$ jump across the walls. We can show 
\cite{bousseau2019scattering} 
that the invariants 
$\Omega_\gamma^\sigma$ are Donaldson-Thomas invariants of the noncompact Calabi-Yau 3-fold $K_{\PP^2}$, total space of the canonical line bundle of $\PP^2$, and that their jumps across the walls are described by the Kontsevich-Soibelman wall-crossing formula \cite{kontsevich2008stability}. A key technical tool in this proof is a $\Ext^2$ vanishing result for $\sigma$-semistable objects due to Li-Zhao \cite{MR3936077}.

We use the invariants $\Omega_\gamma^\sigma$ to define a scattering diagram $\fD_{\PP^2}$ on $U$ as follows.
The rays of $\fD_{\PP^2}$ are indexed by $\gamma \in \Gamma$
and given by 
\[ R_\gamma \coloneq \{ \sigma=(x,y)\in U |\, \Rea Z_\gamma^\sigma=0\,, \Omega_\gamma^\sigma \neq 0 \}\,.\]
As we have $\Rea Z_\gamma^{(x,y)}=ry+dx+r+\frac{3d}{2}-\chi$, the locus $R_\gamma$ is indeed a straight line in $U$, of direction 
$(-r,d)$. To each segment of $R_\gamma$ on which the invariants
$\Omega_{k\gamma}^\sigma$ do not jump, we attach the function 
\[ \sum_{k \geqslant 1} \frac{\Omega_{k\gamma}^\sigma}{k^2}
z^{(kr,-kd)}\,.\]

\begin{thm}\cite{bousseau2019scattering}
The scattering diagram $\fD_{\PP^2}$ is consistent.
\end{thm}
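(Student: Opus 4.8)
The plan is to deduce consistency of $\fD_{\PP^2}$ directly from the Kontsevich--Soibelman wall-crossing formula \cite{kontsevich2008stability} satisfied by the invariants $\Omega_\gamma^\sigma$, which was already established in \cite{bousseau2019scattering} by identifying the $\Omega_\gamma^\sigma$ with Donaldson--Thomas invariants of the Calabi--Yau $3$-fold $K_{\PP^2}$ (using the Li--Zhao $\Ext^2$ vanishing \cite{MR3936077}). Since consistency of a scattering diagram is a purely local condition, it suffices to verify the relation $\vec{\prod_{(\fd,H_\fd)}} \exp(H_\fd)^{\epsilon_\fd}=1$ in the local scattering diagram $\fD_b$ attached to each point $b\in U$; at points off all rays this is vacuous, so the content lies at the intersection points of rays.

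First I would set up the Lie-theoretic dictionary. The relevant grading lattice is $M=\Z^2$, and the rays are labelled through the projection $p\colon\Gamma\to M$, $(r,d,\chi)\mapsto(r,-d)$, so that the wall attached to a segment of $R_\gamma$ carries $H_\fd=\sum_{k\geqslant 1}\frac{\Omega_{k\gamma}^\sigma}{k^2}z^{kp(\gamma)}$. Since $\Rea Z_\gamma^{(x,y)}=ry+dx+r+\tfrac{3d}{2}-\chi$, the line $R_\gamma$ has direction $(-r,d)=-p(\gamma)$, so the monomials $z^{kp(\gamma)}=z^{(kr,-kd)}$ are negatively collinear with $\fd$, as required. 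I would also check the standing hypothesis $[\fg_m,\fg_{m'}]=0$ for parallel $m,m'$: two classes $\gamma,\gamma'$ with $p(\gamma),p(\gamma')$ parallel have proportional $(r,d)$, so $\det(p(\gamma),p(\gamma'))=0$ and the corresponding monomials commute in $\C[M]$.

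The heart of the argument is then to match the factors $\exp(H_\fd)$ with the automorphisms appearing in the Kontsevich--Soibelman formula. In the DT formalism each class $\gamma$ with $\Omega_\gamma^\sigma\neq 0$ contributes an automorphism of the Poisson torus $\C[M]$, and the wall-crossing formula asserts that the phase-ordered product of these automorphisms over all $\sigma$-semistable classes is invariant under deformation of $\sigma$ as long as no new wall is crossed. Applying this invariance around a small loop about $b$ shows that the total monodromy is trivial; rewriting the DT automorphism of class $k\gamma$ as the time-one Hamiltonian flow of $\frac{\Omega_{k\gamma}^\sigma}{k^2}z^{kp(\gamma)}$ with respect to the bracket $[z^{m_1},z^{m_2}]=\det(m_1,m_2)z^{m_1+m_2}$---the standard multicover normalization already responsible for the factors $\frac{(-1)^{k-1}}{k^2}$ in $\fD_{m_1,m_2}$ and in Theorem~\ref{thm_gps}---identifies this monodromy with $\vec{\prod_{(\fd,H_\fd)}}\exp(H_\fd)^{\epsilon_\fd}$. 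Thus local consistency at $b$ is precisely the wall-crossing identity.

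The main obstacle I anticipate is pinning down this dictionary with the correct signs and normalizations: verifying that the DT automorphism of class $k\gamma$ equals $\exp$ of $\frac{\Omega_{k\gamma}^\sigma}{k^2}z^{kp(\gamma)}$, that $\C[M]$ with the determinant bracket is the semiclassical limit of the relevant Poisson structure on $K_0(\PP^2)$, and that the anticlockwise ordering together with the ingoing/outgoing signs $\epsilon_\fd$ reproduces the phase ordering built into the Kontsevich--Soibelman formula. Convergence of the product (finitely many walls meet $b$ in each bounded degree, as in \cite{MR2846484, bousseau2019scattering}) and the compatibility of the integral affine structure on $U$ with the one induced from $\Stab\D^b(\PP^2)$ are secondary points controlled by the cited references. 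Once the dictionary is fixed, consistency of $\fD_{\PP^2}$ is an immediate consequence of the already-established wall-crossing formula.
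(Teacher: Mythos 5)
Your proposal is correct and follows essentially the same route as the paper: reduce consistency to the local condition at intersection points of rays, observe that such points lie on (potential) walls where the central charges are collinear, and identify the ordered product $\vec{\prod}\exp(H_\fd)^{\epsilon_\fd}$ with the Kontsevich--Soibelman wall-crossing identity for the invariants $\Omega_\gamma^\sigma$, whose validity was already established in \cite{bousseau2019scattering} via the DT interpretation on $K_{\PP^2}$ and the Li--Zhao $\Ext^2$ vanishing. Your extra care about the dictionary (the projection $(r,d,\chi)\mapsto(r,-d)$, the $1/k^2$ multicover normalization, orderings and signs) fleshes out exactly the step the paper compresses into ``one checks.''
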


\begin{proof}
When two rays $R_{\gamma_1}$ and $R_{\gamma_2}$
intersect at a point $\sigma \in U$, we have by definition 
$\Rea Z_{\gamma_1}^\sigma=0$ and $\Rea Z_{\gamma_2}^\sigma=0$.
In particular, the central charges $Z_{\gamma_1}^\sigma$ and $Z_{\gamma_2}^{\sigma}$ are collinear and so $\sigma$ is on a
(potential) wall. One checks that the consistency of the local scattering diagram around $\sigma$ is a consequence of the Kontsevich-Soibelman wall-crossing formula describing the jumps of the invariants across the wall. 
\end{proof}

\subsection{Comparison of the scattering diagrams}
In \cref{section_scattering_gw} we defined a scattering diagram 
$\tilde{S}(\fD_{\PP^2})$ on $U$, describing tropically 
log Gromov--Witten invariants in a normal crossing degeneration of the pair $(\PP^2,E)$.
On the other hand, we defined in  \cref{section_scattering_stability} another scattering diagram $\fD_{\PP^2}$ on $U$, 
describing wall-crossing behavior of counting invariants of the derived category $\D^b(\PP^2)$.

\begin{thm}\cite{bousseau2019takahashi} \label{thm_scattering_equality}
We have $\tilde{S}(\fD_{\PP^2/E})=\fD_{\PP^2}$.
\end{thm}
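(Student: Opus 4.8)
The plan is to exploit the uniqueness half of the Kontsevich--Soibelman Lemma (Theorem \ref{KS_lemma}): a consistent scattering diagram is determined by its incoming rays, i.e.\ those not arising as the outgoing product of a local scattering. Both $\tilde{S}(\fD_{\PP^2/E})$ and $\fD_{\PP^2}$ are, by construction, consistent scattering diagrams on the same domain $U$, so it suffices to check that they carry the same underlying graded Lie algebra and the same set of incoming rays; consistency then forces all remaining outgoing rays to agree, first locally around each intersection point by Theorem \ref{KS_lemma}, then globally by propagating across $U$. First I would verify the Lie-algebraic compatibility: the rays of $\fD_{\PP^2}$ carry monomials $z^{(kr,-kd)}$ depending only on the projection $(r,d,\chi)\mapsto(r,-d)$ of $\Gamma$ onto $\Z^2\cong M$, and the bracket governing the Kontsevich--Soibelman wall-crossing formula on $K_{\PP^2}$ is the antisymmetrized Euler form, which on this plane is $\det$. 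This matches the bracket $[z^{m_1},z^{m_2}]=\det(m_1,m_2)z^{m_1+m_2}$ used on the Gromov--Witten side, so the two diagrams are valued in the same group $G$.

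Next I would match the incoming rays. On the Gromov--Witten side these are, by the construction of \cref{section_scattering_gw}, the lifts to $U$ of the six rays of $\fD_{\PP^2/E}$: at each singular point $(n,-\tfrac{n^2}{2})$, $n\in\Z$, two rays along the tangent to the parabola $y=-\tfrac{x^2}{2}$, each carrying $\sum_{k\geqslant 1}\frac{(-1)^{k-1}}{k^2}z^{km}$. On the stability side I would argue that the incoming rays of $\fD_{\PP^2}$ are exactly the walls $R_{\gamma(\cO(n))}$ of the line bundles. A direct computation with the central charge of Lemma \ref{lem_stability} shows that $\Rea Z_{\gamma(\cO(n))}^{(x,y)}=y+nx+1+\tfrac{3n}{2}-\chi(\cO(n))$ vanishes along the line through $(n,-\tfrac{n^2}{2})$ of direction $(-1,n)$, which is precisely the tangent to the parabola at that point; the two half-lines correspond to the stable object $\cO(n)$ and its shift $\cO(n)[1]$, accounting for the two rays per singularity and the two opposite monomials. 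To see that these are the only incoming rays, I would use the description of $\Stab\D^b(\PP^2)$ near the boundary of $U$: approaching $\partial U$ is a (twisted) large-volume limit, where the only semistable objects with $\Ima Z\to 0$ are the line bundles, every other exceptional bundle being produced from the $\cO(n)$ by mutations, which on the scattering side are exactly interior scatterings.

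Finally I would match the attached functions. For $\gamma=\gamma(\cO(n))$ the class $k\gamma$ has vanishing Bogomolov discriminant, so any $\sigma$-semistable object of this class along the incoming ray is S-equivalent to $\cO(n)^{\oplus k}$; hence $M_{k\gamma}^\sigma$ is a point and $\Omega_{k\gamma}^\sigma=1$. Comparing $\sum_{k}\frac{\Omega_{k\gamma}^\sigma}{k^2}z^{(k,-kn)}$ with the Gromov--Witten initial function $\sum_{k}\frac{(-1)^{k-1}}{k^2}z^{km}$ then reduces to a bookkeeping of conventions: the identification of $M$ with the $(r,-d)$-plane and, crucially, the sign twist relating the group algebra $\C[M]$ to the quantum-torus/Behrend-function conventions of the Kontsevich--Soibelman formula, which is exactly what converts $\Omega_{k\gamma}^\sigma=1$ into the coefficient $(-1)^{k-1}$.

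The hard part will be the structural input of the second paragraph --- that $\fD_{\PP^2}$ has no incoming rays beyond the line-bundle walls --- together with the global upgrade of the local uniqueness in Theorem \ref{KS_lemma}. The former requires genuine control of Bridgeland stability on $\PP^2$ near $\partial U$ (the Dr\'ezet--Le Potier/exceptional-bundle picture, and the $\Ext^2$-vanishing of Li--Zhao already invoked for consistency of $\fD_{\PP^2}$), while the latter requires checking that the chain of local scatterings propagates consistently across the nontrivial integral-affine structure of $U$, including the monodromy around the three singularities. By contrast, the sign and lattice reconciliation of the last paragraph is delicate but essentially routine once the conventions are fixed.
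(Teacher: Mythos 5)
Your overall strategy is exactly the paper's: both diagrams are consistent on $U$, a consistent scattering diagram is determined by its initial data, the initial rays on the Gromov--Witten side are the six (lifted) rays tangent to the parabola at the points $(n,-\tfrac{n^2}{2})$, and these are identified with the rays $R_{\gamma(\cO(n))}$ (and their shifts $\cO(n)[1]$), the singular points being precisely where $Z_{\gamma(\cO(n))}$ vanishes. However, your justification of the crucial step --- that $\fD_{\PP^2}$ has no incoming rays other than the line-bundle walls near the parabola --- is wrong as stated. The parabola $y=-\tfrac{x^2}{2}$ is the image of $t=0$ under $(s,t)\mapsto(s,-\tfrac{1}{2}(s^2-t^2))$, so approaching it is the \emph{opposite} of a large-volume limit: large volume is $y\to+\infty$, which is where Gieseker stability is recovered (this is how the vertical asymptotic rays are interpreted in Theorem \ref{thm_gw_sheaves}). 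Moreover, on the parabola $\Ima Z^{(x,y)}_\gamma=(d-rx)\sqrt{x^2+2y}$ vanishes for \emph{every} class $\gamma$, so the criterion ``the only semistable objects with $\Ima Z\to 0$ are the line bundles'' selects nothing. The paper's actual mechanism is the one you only name in passing at the end: near the parabola the stability conditions are algebraic, described by quiver representations attached to the exceptional collections $(\cO(n-1),\cO(n),\cO(n+1))$, and it is this quiver description that shows the rays $R_{\gamma(\cO(n))}$ are the only rays of $\fD_{\PP^2}$ in a small neighborhood of the boundary.

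A second, smaller slip is in the sign bookkeeping for the initial functions. With the survey's definition, $M^\sigma_{k\gamma(\cO(n))}$ is a point for all $k$, so your $\Omega^\sigma_{k\gamma}=1$ for all $k$; but then a quadratic-refinement sign twist $z^\gamma\mapsto -z^\gamma$ turns $\sum_{k\geqslant 1}\frac{1}{k^2}z^{k\gamma}$ into $\sum_{k\geqslant 1}\frac{(-1)^{k}}{k^2}z^{k\gamma}$, which differs from the required $\sum_{k\geqslant 1}\frac{(-1)^{k-1}}{k^2}z^{k\gamma}$ by an overall sign. The correct accounting is that in the DT normalization a single rigid BPS state $\Omega_\gamma=1$ (with $\Omega_{k\gamma}=0$ for $k\geqslant 2$) contributes exactly $\sum_{k\geqslant 1}\frac{(-1)^{k-1}}{k^2}z^{k\gamma}$ through the multicover formula, matching the functions $H$ of $\fD_{\PP^2/E}$; your derivation as written does not produce this. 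Your Lie-algebra compatibility check and your identification of the tangent directions $(-1,n)$ via $\Rea Z^{(x,y)}_{\gamma(\cO(n))}$ are correct and agree with the paper.
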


\begin{proof}
We give a sketch of the proof. We know that both 
$\tilde{S}(\fD_{\PP^2/E})$ and $\fD_{\PP^2}$ are consistent scattering diagrams on $U$. In order to prove that they coincide, it is enough to show that they have the same initial data. Initial data for $\tilde{S}(\fD_{\PP^2/E})$ 
are rays emitted by the singular points 
$(n,-\frac{n^2}{2})$ and tangent to the parabola 
$y=-\frac{x^2}{2}$. On the side of 
$\fD_{\PP^2}$, one can identify these rays with the rays
$R_{\gamma(\cO(n))}$ defined by the line bundles $\cO(n)$
(and their shift $\cO(n)[1]$). In particular, the singular points $(n,-\frac{n^2}{2})$ are exactly the points where the central charge $Z_{\gamma(\cO(n))}$ goes to zero.
To conclude, one needs to show that the rays $R_{\gamma(\cO(n))}$ are the only rays in $\fD_{\PP^2}$
existing in a small neighborhood of the parabola 
$y=-\frac{x^2}{2}$. This follows from a description of the stability conditions near the parabola 
$y=-\frac{x^2}{2}$ in terms of quiver representations.
\end{proof}

We use Theorem \ref{thm_scattering_equality}
to obtain a comparison of the relative Gromov--Witten invariants $N_{0,d}^{\PP^2/E}$ and of the dimension $1$ sheaves invariants $\Omega_{d,\chi}^{\PP^2}$.

For every $d \in \Z_{>0}$ and $\chi \in \Z$, we define
\[ \ell_{d,\chi} \coloneq \frac{d}{\gcd(d,\chi)} \in  \Z_{>0}\,.\]

\begin{thm} \label{thm_gw_sheaves}
For every $d\in \Z_{>0}$ and $\chi \in \Z$, we have 
\[ 
\Omega_{d,\chi}^{\PP^2}=\sum_{\ell_{d,\chi}|k|d}
\frac{s_{k,\ell_{d,\chi}}}{r_{\ell_{d,\chi}}} \Omega_{d,k}^{\PP^2/E}\,.\]
\end{thm}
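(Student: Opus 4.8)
The plan is to read off Theorem~\ref{thm_gw_sheaves} directly from the equality of scattering diagrams established in Theorem~\ref{thm_scattering_equality}. Since $\tilde{S}(\fD_{\PP^2/E})=\fD_{\PP^2}$, the wall-function attached to any fixed ray of the common diagram on $U$ admits two descriptions: a tropical one, through Theorem~\ref{tropical_correspondence}, in terms of the relative invariants $N_{0,d}^{\PP^2/E,k}$; and a sheaf-theoretic one, through \cref{section_scattering_stability}, in terms of the invariants $\Omega_\gamma^\sigma$. I would therefore isolate the rays that carry dimension $1$ sheaves, write down the two expressions for the attached function, and compare their coefficients.

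First I would locate these rays on the sheaf side. By \cref{section_scattering_stability} the ray $R_\gamma$ for $\gamma=(r,d,\chi)$ has direction $(-r,d)$, so the \emph{vertical} rays are exactly those with $r=0$, i.e.\ the classes $\gamma=(0,d,\chi)$ of dimension $1$ sheaves. Writing $g=\gcd(d,\chi)$ and $\gamma_0=(0,d_0,\chi_0)$ for the underlying primitive class, so that $d=gd_0$ and $\ell_{d,\chi}=d/g=d_0$, the equation $\Rea Z_{\gamma_0}^{(x,y)}=0$ fixes the slope $x_0$ of the ray through $\chi_0/d_0$. The attached function is $\sum_{k\geqslant 1}\frac{\Omega_{k\gamma_0}^\sigma}{k^2}z^{k(0,-d_0)}$, and for $y$ large the stability condition $\sigma^{(x,y)}$ lies in the Gieseker chamber, so $\Omega_{k\gamma_0}^\sigma=\Omega_{kd_0,k\chi_0}^{\PP^2}$; the signs agree since $\dim M_{kd_0,k\chi_0}=(kd_0)^2+1\equiv kd_0-1 \bmod 2$.

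On the Gromov--Witten side, Theorem~\ref{tropical_correspondence} presents the same function as $H_\ell$ with $\ell=d([x_0])$. The first key identification is $d([x_0])=\ell_{d,\chi}$, matching the torsion order of the slope with $\ell_{d,\chi}=d_0$; this is precisely where the comparison between honest torsion points of $E$ and torsion packets of the tropical elliptic curve enters, and it is responsible for the normalization factor $s_{k,\ell}/r_\ell$. Equating the coefficients recording degree $d$ in the two expressions, and converting the raw invariants $N_{0,d}^{\PP^2/E,k}$ into the BPS invariants $\Omega_{d,k}^{\PP^2/E}$ via $\overline{\Omega}_{d,k}^{\PP^2/E}=\sum_{k\mid d'\mid d}\frac{1}{(d/d')^2}\Omega_{d',k}^{\PP^2/E}$ while reorganizing the $k^{-2}$ factors on the sheaf side, the whole family of rays of a fixed slope should collapse to the single BPS-level identity of Theorem~\ref{thm_gw_sheaves}.

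The hard part will be the bookkeeping in this last step. Because $\ell_{d,\chi}$ depends only on the slope, a single vertical ray governs all $d$ with that slope at once, so the matching is really an identity of generating series that must be inverted by a Möbius-type argument. One must reconcile the explicit $k^{-2}$ in the sheaf-side wall-function with the multiple-cover factors $(d/d')^{-2}$ hidden inside $N_{0,d}^{\PP^2/E,k}$, track the signs $(-1)^{d-1}$ coming both from $\overline{\Omega}^{\PP^2/E}=(-1)^{d-1}N^{\PP^2/E}$ and from $\Omega_\gamma^\sigma=(-1)^{\dim M_\gamma^\sigma}\,Ie$, and check that the weights $s_{k,\ell}/r_\ell$ survive the inversion with the correct summation range $\ell_{d,\chi}\mid k\mid d$. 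The two geometric inputs that make this purely combinatorial matching possible are the Gieseker-chamber identification $\Omega_{k\gamma_0}^\sigma=\Omega_{kd_0,k\chi_0}^{\PP^2}$ for large $y$ and the slope/torsion dictionary $d([x_0])=\ell_{d,\chi}$; I expect the sign and multiple-cover reconciliation, rather than either geometric input, to be the real obstacle.
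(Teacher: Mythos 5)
Your proposal follows essentially the same route as the paper's own proof: both deduce the theorem from the equality $\tilde{S}(\fD_{\PP^2/E})=\fD_{\PP^2}$ of Theorem \ref{thm_scattering_equality} by comparing the functions attached to asymptotic vertical rays, identified on the sheaf side with dimension-$1$ classes $\gamma=(0,d,\chi)$ via the large-$y$ Gieseker chamber and on the Gromov--Witten side via Theorem \ref{tropical_correspondence}. The only difference is that you make explicit what the paper's four-line sketch leaves implicit --- the sign comparison $(-1)^{\dim M_{d,\chi}}=(-1)^{d-1}$, the slope/torsion dictionary $d([x_0])=\ell_{d,\chi}$, and the M\"obius-type reconciliation of the explicit $k^{-2}$ wall-function factors with the multiple-cover contributions hidden inside $N_{0,d}^{\PP^2/E,k}$ --- and you correctly flag this last bookkeeping step as the real content of the matching.
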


\begin{proof}
According to Theorem \ref{tropical_correspondence}, the asymptotic vertical rays of $\tilde{S}(\fD_{\PP^2/E})$
compute the relative Gromov--Witten invariants 
$N_{0,d}^{\PP^2/E}$. On the other hand, the asymptotic vertical rays of $\fD_{\PP^2}$ are defined in terms of the invariants $\Omega_{d,\chi}^{\PP^2}$. Indeed, vertical rays correspond to classes $\gamma=(r,d,\chi)$ with $r=0$, i.e.\ sheaves of dimension $1$, and $\sigma$-semistability coincides with Gieseker semistability for $\sigma=(x,y)\in U$
with $y>>0$. The result follows from the equality of scattering diagrams given by Theorem 
\ref{thm_scattering_equality}.
\end{proof}

One should view the sheaf/Gromov--Witten correspondence given by Theorem \ref{thm_gw_sheaves} as an analogue of the correspondence presented in \cite{MR2662867} between quiver Donaldson-Thomas invariants and Gromov--Witten invariants of log Calabi-Yau surfaces $(Y,D)$ with $D$ a cycle of rational curves. 
The analogy also holds at the level of proofs: in both cases, a scattering diagram is used as an intermediate algebraic/combinatorial object between two different looking geometries. 
The main difference is that the scattering diagram of \cite{MR2662867} is a local scattering diagram
(in the sense of \cref{section_local_scattering}), whereas we consider a scattering diagram containing infinitely many such local scattering diagrams. 
Equivalently, the quiver Donaldson-Thomas invariants of \cite{MR2662867} 
involve a fixed abelian category, 
whereas we are crucially working with stability conditions
with moving abelian hearts
on the triangulated category $\D^b(\PP^2)$.

Our main result, Theorem \ref{thm_main}, follows directly from Theorem \ref{thm_gw_sheaves}.

\newpage

\begin{figure}[h!]
\centering
\resizebox{0.90\textwidth}{0.90\textheight}{
\rotatebox{90}{
\begin{tikzpicture}[xscale=0.6,yscale=2.7,font=\fontsize{6}{6},define rgb/.code={\definecolor{mycolor}{RGB}{#1}}, rgb color/.style={define rgb={#1},mycolor}]
\draw[->,rgb color={255,132,0}] (-30.0,-4.00) -- (-51.0,-6.00);
\draw[->,rgb color={255,132,0}] (-30.0,-4.00) -- (-48.0,-6.00);
\draw[->,rgb color={255,132,0}] (-30.0,-4.00) -- (-48.0,-6.00);
\draw[->,rgb color={255,132,0}] (-30.0,-4.00) -- (-48.0,-6.00);
\draw[->,rgb color={255,132,0}] (-30.0,-4.00) -- (-42.0,-6.00);
\draw[->,rgb color={255,132,0}] (-30.0,-4.00) -- (-36.0,-6.00);
\draw[->,rgb color={255,132,0}] (-30.0,-4.00) -- (7.00,-4.00) node[right]{};
\draw[->,rgb color={255,132,0}] (-30.0,-4.00) -- (7.00,-4.00) node[right]{};
\draw[->,rgb color={255,132,0}] (-30.0,-4.00) -- (7.00,-4.00) node[right]{};
\draw[->,rgb color={255,132,0}] (-30.0,-4.00) -- (7.00,-4.00) node[right]{};
\draw[->,rgb color={255,132,0}] (-30.0,-4.00) -- (7.00,-4.00) node[right]{};
\draw[->,rgb color={255,132,0}] (-30.0,-4.00) -- (7.00,-4.00) node[right]{};
\draw[->,rgb color={255,132,0}] (-27.0,-4.00) -- (7.00,-4.00) node[right]{};
\draw[->,rgb color={255,132,0}] (-27.0,-4.00) -- (7.00,-4.00) node[right]{};
\draw[->,rgb color={255,132,0}] (-30.0,-4.00) -- (7.00,-4.00) node[right]{};
\draw[->,rgb color={255,132,0}] (-30.0,-4.00) -- (7.00,-1.94);
\draw[->,rgb color={255,132,0}] (-30.0,-4.00) -- (7.00,-1.94);
\draw[->,rgb color={255,132,0}] (-30.0,-4.00) -- (7.00,-1.94);
\draw[->,rgb color={255,132,0}] (-30.0,-4.00) -- (7.00,-2.24);
\draw[->,rgb color={255,132,0}] (-30.0,-4.00) -- (7.00,-2.46);
\draw[->,rgb color={255,132,0}] (-30.0,-4.00) -- (7.00,-1.76);
\draw[->,rgb color={255,132,0}] (-30.0,5.00) -- (-51.0,7.00);
\draw[->,rgb color={255,132,0}] (-30.0,5.00) -- (-48.0,7.00);
\draw[->,rgb color={255,132,0}] (-30.0,5.00) -- (-48.0,7.00);
\draw[->,rgb color={255,132,0}] (-30.0,5.00) -- (-48.0,7.00);
\draw[->,rgb color={255,132,0}] (-30.0,5.00) -- (-42.0,7.00);
\draw[->,rgb color={255,132,0}] (-30.0,5.00) -- (-36.0,7.00);
\draw[->,rgb color={255,132,0}] (-30.0,5.00) -- (7.00,5.00) node[right]{};
\draw[->,rgb color={255,132,0}] (-30.0,5.00) -- (7.00,5.00) node[right]{};
\draw[->,rgb color={255,132,0}] (-30.0,5.00) -- (7.00,5.00) node[right]{};
\draw[->,rgb color={255,132,0}] (-30.0,5.00) -- (7.00,5.00) node[right]{};
\draw[->,rgb color={255,132,0}] (-30.0,5.00) -- (7.00,5.00) node[right]{};
\draw[->,rgb color={255,132,0}] (-30.0,5.00) -- (7.00,5.00) node[right]{};
\draw[->,rgb color={255,132,0}] (-27.0,5.00) -- (7.00,5.00) node[right]{};
\draw[->,rgb color={255,132,0}] (-27.0,5.00) -- (7.00,5.00) node[right]{};
\draw[->,rgb color={255,132,0}] (-30.0,5.00) -- (7.00,5.00) node[right]{};
\draw[->,rgb color={255,132,0}] (-30.0,5.00) -- (7.00,2.94);
\draw[->,rgb color={255,132,0}] (-30.0,5.00) -- (7.00,2.94);
\draw[->,rgb color={255,132,0}] (-30.0,5.00) -- (7.00,2.94);
\draw[->,rgb color={255,132,0}] (-30.0,5.00) -- (7.00,3.24);
\draw[->,rgb color={255,132,0}] (-30.0,5.00) -- (7.00,3.46);
\draw[->,rgb color={255,132,0}] (-30.0,5.00) -- (7.00,2.76);
\draw[->,rgb color={255,132,0}] (-27.0,-4.00) -- (-39.0,-6.00);
\draw[->,rgb color={255,132,0}] (-27.0,-4.00) -- (7.00,-2.38);
\draw[->,rgb color={255,132,0}] (-27.0,5.00) -- (-39.0,7.00);
\draw[->,rgb color={255,132,0}] (-27.0,5.00) -- (7.00,3.38);
\draw[->,rgb color={255,132,0}] (-22.5,-3.75) -- (7.00,-3.75) node[right]{};
\draw[->,rgb color={255,132,0}] (-24.8,-3.75) -- (7.00,-3.75) node[right]{};
\draw[->,rgb color={255,107,0}] (-22.5,4.75) -- (7.00,4.75) node[right]{};
\draw[->,rgb color={255,107,0}] (-24.8,4.75) -- (7.00,4.75) node[right]{};
\draw[->,rgb color={255,132,0}] (-24.0,-3.67) -- (7.00,-3.67) node[right]{};
\draw[->,rgb color={255,132,0}] (-24.0,-3.67) -- (7.00,-3.67) node[right]{};
\draw[->,rgb color={255,0,0}] (-24.0,-3.50) -- (7.00,-0.917);
\draw[->,rgb color={255,0,0}] (-24.0,4.50) -- (7.00,1.92);
\draw[->,rgb color={255,107,0}] (-24.0,4.67) -- (7.00,4.67) node[right]{};
\draw[->,rgb color={255,132,0}] (-22.5,-3.50) -- (-30.0,-6.00);
\draw[->,rgb color={255,132,0}] (-22.5,-3.50) -- (7.00,-3.50) node[right]{};
\draw[->,rgb color={255,132,0}] (-22.5,-3.50) -- (7.00,-3.50) node[right]{};
\draw[->,rgb color={255,132,0}] (-21.0,-3.50) -- (7.00,-3.50) node[right]{};
\draw[->,rgb color={255,132,0}] (-18.0,-3.50) -- (7.00,-3.50) node[right]{};
\draw[->,rgb color={255,132,0}] (-22.5,-3.50) -- (7.00,-3.50) node[right]{};
\draw[->,rgb color={255,132,0}] (-22.5,-3.50) -- (7.00,-2.10);
\draw[->,rgb color={255,107,0}] (-22.5,4.50) -- (-30.0,7.00);
\draw[->,rgb color={255,107,0}] (-22.5,4.50) -- (7.00,4.50) node[right]{};
\draw[->,rgb color={255,107,0}] (-22.5,4.50) -- (7.00,4.50) node[right]{};
\draw[->,rgb color={255,107,0}] (-21.0,4.50) -- (7.00,4.50) node[right]{};
\draw[->,rgb color={255,107,0}] (-18.0,4.50) -- (7.00,4.50) node[right]{};
\draw[->,rgb color={255,107,0}] (-22.5,4.50) -- (7.00,4.50) node[right]{};
\draw[->,rgb color={255,107,0}] (-22.5,4.50) -- (7.00,3.10);
\draw[->,rgb color={255,132,0}] (-20.0,-3.33) -- (7.00,-3.33) node[right]{};
\draw[->,rgb color={255,107,0}] (-20.0,4.33) -- (7.00,4.33) node[right]{};
\draw[->,rgb color={255,107,0}] (-20.0,4.33) -- (7.00,4.33) node[right]{};
\draw[->,rgb color={255,132,0}] (-16.5,-3.25) -- (7.00,-3.25) node[right]{};
\draw[->,rgb color={255,132,0}] (-18.8,-3.25) -- (7.00,-3.25) node[right]{};
\draw[->,rgb color={255,107,0}] (-16.5,4.25) -- (7.00,4.25) node[right]{};
\draw[->,rgb color={255,107,0}] (-18.8,4.25) -- (7.00,4.25) node[right]{};
\draw[->,rgb color={255,107,0}] (-18.0,-3.00) -- (-40.5,-6.00);
\draw[->,rgb color={255,107,0}] (-18.0,-3.00) -- (-36.0,-6.00);
\draw[->,rgb color={255,107,0}] (-18.0,-3.00) -- (-36.0,-6.00);
\draw[->,rgb color={255,107,0}] (-18.0,-3.00) -- (-36.0,-6.00);
\draw[->,rgb color={255,107,0}] (-18.0,-3.00) -- (-27.0,-6.00);
\draw[->,rgb color={255,107,0}] (-18.0,-3.00) -- (-18.0,-6.00);
\draw[->,rgb color={255,107,0}] (-18.0,-3.00) -- (7.00,-3.00) node[right]{};
\draw[->,rgb color={255,107,0}] (-18.0,-3.00) -- (7.00,-3.00) node[right]{};
\draw[->,rgb color={255,107,0}] (-18.0,-3.00) -- (7.00,-3.00) node[right]{};
\draw[->,rgb color={255,107,0}] (-18.0,-3.00) -- (7.00,-3.00) node[right]{};
\draw[->,rgb color={255,107,0}] (-18.0,-3.00) -- (7.00,-3.00) node[right]{};
\draw[->,rgb color={255,107,0}] (-18.0,-3.00) -- (7.00,-3.00) node[right]{};
\draw[->,rgb color={255,107,0}] (-15.0,-3.00) -- (7.00,-3.00) node[right]{};
\draw[->,rgb color={255,107,0}] (-15.0,-3.00) -- (7.00,-3.00) node[right]{};
\draw[->,rgb color={255,107,0}] (-18.0,-3.00) -- (7.00,-3.00) node[right]{};
\draw[->,rgb color={255,107,0}] (-18.0,-3.00) -- (7.00,-1.33);
\draw[->,rgb color={255,107,0}] (-18.0,-3.00) -- (7.00,-1.33);
\draw[->,rgb color={255,107,0}] (-18.0,-3.00) -- (7.00,-1.33);
\draw[->,rgb color={255,107,0}] (-18.0,-3.00) -- (7.00,-1.61);
\draw[->,rgb color={255,107,0}] (-18.0,-3.00) -- (7.00,-1.81);
\draw[->,rgb color={255,107,0}] (-18.0,-3.00) -- (7.00,-1.15);
\draw[->,rgb color={255,107,0}] (-18.0,4.00) -- (-40.5,7.00);
\draw[->,rgb color={255,107,0}] (-18.0,4.00) -- (-36.0,7.00);
\draw[->,rgb color={255,107,0}] (-18.0,4.00) -- (-36.0,7.00);
\draw[->,rgb color={255,107,0}] (-18.0,4.00) -- (-36.0,7.00);
\draw[->,rgb color={255,107,0}] (-18.0,4.00) -- (-27.0,7.00);
\draw[->,rgb color={255,107,0}] (-18.0,4.00) -- (-18.0,7.00);
\draw[->,rgb color={255,107,0}] (-18.0,4.00) -- (7.00,4.00) node[right]{};
\draw[->,rgb color={255,107,0}] (-18.0,4.00) -- (7.00,4.00) node[right]{};
\draw[->,rgb color={255,107,0}] (-18.0,4.00) -- (7.00,4.00) node[right]{};
\draw[->,rgb color={255,107,0}] (-18.0,4.00) -- (7.00,4.00) node[right]{};
\draw[->,rgb color={255,107,0}] (-18.0,4.00) -- (7.00,4.00) node[right]{};
\draw[->,rgb color={255,107,0}] (-18.0,4.00) -- (7.00,4.00) node[right]{};
\draw[->,rgb color={255,107,0}] (-15.0,4.00) -- (7.00,4.00) node[right]{};
\draw[->,rgb color={255,107,0}] (-15.0,4.00) -- (7.00,4.00) node[right]{};
\draw[->,rgb color={255,107,0}] (-18.0,4.00) -- (7.00,4.00) node[right]{};
\draw[->,rgb color={255,107,0}] (-18.0,4.00) -- (7.00,2.33);
\draw[->,rgb color={255,107,0}] (-18.0,4.00) -- (7.00,2.33);
\draw[->,rgb color={255,107,0}] (-18.0,4.00) -- (7.00,2.33);
\draw[->,rgb color={255,107,0}] (-18.0,4.00) -- (7.00,2.61);
\draw[->,rgb color={255,107,0}] (-18.0,4.00) -- (7.00,2.81);
\draw[->,rgb color={255,107,0}] (-18.0,4.00) -- (7.00,2.15);
\draw[->,rgb color={255,107,0}] (-15.0,-3.00) -- (-24.0,-6.00);
\draw[->,rgb color={255,107,0}] (-15.0,-3.00) -- (7.00,-1.78);
\draw[->,rgb color={255,107,0}] (-15.0,4.00) -- (-24.0,7.00);
\draw[->,rgb color={255,107,0}] (-15.0,4.00) -- (7.00,2.78);
\draw[->,rgb color={255,107,0}] (-11.2,-2.75) -- (7.00,-2.75) node[right]{};
\draw[->,rgb color={255,107,0}] (-13.5,-2.75) -- (7.00,-2.75) node[right]{};
\draw[->,rgb color={255,0,0}] (-13.5,-2.50) -- (-45.0,-6.00);
\draw[->,rgb color={255,0,0}] (-13.5,-2.50) -- (7.00,-0.222);
\draw[->,rgb color={255,0,0}] (-13.5,3.50) -- (-45.0,7.00);
\draw[->,rgb color={255,0,0}] (-13.5,3.50) -- (7.00,1.22);
\draw[->,rgb color={255,77,0}] (-11.2,3.75) -- (7.00,3.75) node[right]{};
\draw[->,rgb color={255,77,0}] (-13.5,3.75) -- (7.00,3.75) node[right]{};
\draw[->,rgb color={255,107,0}] (-13.0,-2.67) -- (7.00,-2.67) node[right]{};
\draw[->,rgb color={255,107,0}] (-13.0,-2.67) -- (7.00,-2.67) node[right]{};
\draw[->,rgb color={255,77,0}] (-13.0,3.67) -- (7.00,3.67) node[right]{};
\draw[->,rgb color={255,107,0}] (-12.0,-2.50) -- (-12.0,-6.00);
\draw[->,rgb color={255,107,0}] (-12.0,-2.50) -- (7.00,-2.50) node[right]{};
\draw[->,rgb color={255,107,0}] (-12.0,-2.50) -- (7.00,-2.50) node[right]{};
\draw[->,rgb color={255,107,0}] (-10.5,-2.50) -- (7.00,-2.50) node[right]{};
\draw[->,rgb color={255,107,0}] (-7.50,-2.50) -- (7.00,-2.50) node[right]{};
\draw[->,rgb color={255,107,0}] (-12.0,-2.50) -- (7.00,-2.50) node[right]{};
\draw[->,rgb color={255,107,0}] (-12.0,-2.50) -- (7.00,-1.44);
\draw[->,rgb color={255,77,0}] (-12.0,3.50) -- (-12.0,7.00);
\draw[->,rgb color={255,77,0}] (-12.0,3.50) -- (7.00,3.50) node[right]{};
\draw[->,rgb color={255,77,0}] (-12.0,3.50) -- (7.00,3.50) node[right]{};
\draw[->,rgb color={255,77,0}] (-10.5,3.50) -- (7.00,3.50) node[right]{};
\draw[->,rgb color={255,77,0}] (-7.50,3.50) -- (7.00,3.50) node[right]{};
\draw[->,rgb color={255,77,0}] (-12.0,3.50) -- (7.00,3.50) node[right]{};
\draw[->,rgb color={255,77,0}] (-12.0,3.50) -- (7.00,2.44);
\draw[->,rgb color={255,107,0}] (-10.0,-2.33) -- (7.00,-2.33) node[right]{};
\draw[->,rgb color={255,77,0}] (-10.0,3.33) -- (7.00,3.33) node[right]{};
\draw[->,rgb color={255,77,0}] (-10.0,3.33) -- (7.00,3.33) node[right]{};
\draw[->,rgb color={255,107,0}] (-6.75,-2.25) -- (7.00,-2.25) node[right]{};
\draw[->,rgb color={255,107,0}] (-9.00,-2.25) -- (7.00,-2.25) node[right]{};
\draw[->,rgb color={255,77,0}] (-9.00,-2.00) -- (-27.0,-6.00);
\draw[->,rgb color={255,77,0}] (-9.00,-2.00) -- (-21.0,-6.00);
\draw[->,rgb color={255,77,0}] (-9.00,-2.00) -- (-21.0,-6.00);
\draw[->,rgb color={255,77,0}] (-9.00,-2.00) -- (-21.0,-6.00);
\draw[->,rgb color={255,77,0}] (-9.00,-2.00) -- (-9.00,-6.00);
\draw[->,rgb color={255,77,0}] (-9.00,-2.00) -- (7.00,-2.00) node[right]{};
\draw[->,rgb color={255,77,0}] (-9.00,-2.00) -- (7.00,-2.00) node[right]{};
\draw[->,rgb color={255,77,0}] (-9.00,-2.00) -- (7.00,-2.00) node[right]{};
\draw[->,rgb color={255,77,0}] (-9.00,-2.00) -- (7.00,-2.00) node[right]{};
\draw[->,rgb color={255,77,0}] (-9.00,-2.00) -- (7.00,-2.00) node[right]{};
\draw[->,rgb color={255,77,0}] (-9.00,-2.00) -- (7.00,-2.00) node[right]{};
\draw[->,rgb color={255,77,0}] (-6.00,-2.00) -- (7.00,-2.00) node[right]{};
\draw[->,rgb color={255,77,0}] (-6.00,-2.00) -- (7.00,-2.00) node[right]{};
\draw[->,rgb color={255,77,0}] (-9.00,-2.00) -- (7.00,-2.00) node[right]{};
\draw[->,rgb color={255,77,0}] (-9.00,-2.00) -- (3.00,-6.00);
\draw[->,rgb color={255,77,0}] (-9.00,-2.00) -- (7.00,-0.667);
\draw[->,rgb color={255,77,0}] (-9.00,-2.00) -- (7.00,-0.667);
\draw[->,rgb color={255,77,0}] (-9.00,-2.00) -- (7.00,-0.667);
\draw[->,rgb color={255,77,0}] (-9.00,-2.00) -- (7.00,-0.933);
\draw[->,rgb color={255,77,0}] (-9.00,-2.00) -- (7.00,-1.11);
\draw[->,rgb color={255,77,0}] (-9.00,-2.00) -- (7.00,-0.476);
\draw[->,rgb color={255,77,0}] (-9.00,3.00) -- (-27.0,7.00);
\draw[->,rgb color={255,77,0}] (-9.00,3.00) -- (-21.0,7.00);
\draw[->,rgb color={255,77,0}] (-9.00,3.00) -- (-21.0,7.00);
\draw[->,rgb color={255,77,0}] (-9.00,3.00) -- (-21.0,7.00);
\draw[->,rgb color={255,77,0}] (-9.00,3.00) -- (-9.00,7.00);
\draw[->,rgb color={255,77,0}] (-9.00,3.00) -- (7.00,3.00) node[right]{};
\draw[->,rgb color={255,77,0}] (-9.00,3.00) -- (7.00,3.00) node[right]{};
\draw[->,rgb color={255,77,0}] (-9.00,3.00) -- (7.00,3.00) node[right]{};
\draw[->,rgb color={255,77,0}] (-9.00,3.00) -- (7.00,3.00) node[right]{};
\draw[->,rgb color={255,77,0}] (-9.00,3.00) -- (7.00,3.00) node[right]{};
\draw[->,rgb color={255,77,0}] (-9.00,3.00) -- (7.00,3.00) node[right]{};
\draw[->,rgb color={255,77,0}] (-6.00,3.00) -- (7.00,3.00) node[right]{};
\draw[->,rgb color={255,77,0}] (-6.00,3.00) -- (7.00,3.00) node[right]{};
\draw[->,rgb color={255,77,0}] (-9.00,3.00) -- (7.00,3.00) node[right]{};
\draw[->,rgb color={255,77,0}] (-9.00,3.00) -- (3.00,7.00);
\draw[->,rgb color={255,77,0}] (-9.00,3.00) -- (7.00,1.67);
\draw[->,rgb color={255,77,0}] (-9.00,3.00) -- (7.00,1.67);
\draw[->,rgb color={255,77,0}] (-9.00,3.00) -- (7.00,1.67);
\draw[->,rgb color={255,77,0}] (-9.00,3.00) -- (7.00,1.93);
\draw[->,rgb color={255,77,0}] (-9.00,3.00) -- (7.00,2.11);
\draw[->,rgb color={255,77,0}] (-9.00,3.00) -- (7.00,1.48);
\draw[->,rgb color={255,77,0}] (-6.75,3.25) -- (7.00,3.25) node[right]{};
\draw[->,rgb color={255,77,0}] (-9.00,3.25) -- (7.00,3.25) node[right]{};
\draw[->,rgb color={255,77,0}] (-6.00,-2.00) -- (-6.00,-6.00);
\draw[->,rgb color={255,77,0}] (-6.00,-2.00) -- (7.00,-1.13);
\draw[->,rgb color={255,0,0}] (-6.00,-1.50) -- (-33.0,-6.00);
\draw[->,rgb color={255,0,0}] (-6.00,-1.50) -- (7.00,0.667);
\draw[->,rgb color={255,0,0}] (-6.00,2.50) -- (-33.0,7.00);
\draw[->,rgb color={255,0,0}] (-6.00,2.50) -- (7.00,0.333);
\draw[->,rgb color={255,77,0}] (-6.00,3.00) -- (-6.00,7.00);
\draw[->,rgb color={255,77,0}] (-6.00,3.00) -- (7.00,2.13);
\draw[->,rgb color={255,77,0}] (-3.00,-1.75) -- (7.00,-1.75) node[right]{};
\draw[->,rgb color={255,77,0}] (-5.25,-1.75) -- (7.00,-1.75) node[right]{};
\draw[->,rgb color={255,42,0}] (-3.00,2.75) -- (7.00,2.75) node[right]{};
\draw[->,rgb color={255,42,0}] (-5.25,2.75) -- (7.00,2.75) node[right]{};
\draw[->,rgb color={255,77,0}] (-5.00,-1.67) -- (7.00,-1.67) node[right]{};
\draw[->,rgb color={255,42,0}] (-5.00,2.67) -- (7.00,2.67) node[right]{};
\draw[->,rgb color={255,77,0}] (-4.50,-1.50) -- (7.00,-1.50) node[right]{};
\draw[->,rgb color={255,77,0}] (-4.50,-1.50) -- (7.00,-1.50) node[right]{};
\draw[->,rgb color={255,77,0}] (-3.00,-1.50) -- (7.00,-1.50) node[right]{};
\draw[->,rgb color={255,77,0}] (0.000,-1.50) -- (7.00,-1.50) node[right]{};
\draw[->,rgb color={255,77,0}] (-4.50,-1.50) -- (7.00,-1.50) node[right]{};
\draw[->,rgb color={255,77,0}] (-4.50,-1.50) -- (7.00,-5.33);
\draw[->,rgb color={255,77,0}] (-4.50,-1.50) -- (7.00,-0.733);
\draw[->,rgb color={255,42,0}] (-4.50,2.50) -- (7.00,2.50) node[right]{};
\draw[->,rgb color={255,42,0}] (-4.50,2.50) -- (7.00,2.50) node[right]{};
\draw[->,rgb color={255,42,0}] (-3.00,2.50) -- (7.00,2.50) node[right]{};
\draw[->,rgb color={255,42,0}] (0.000,2.50) -- (7.00,2.50) node[right]{};
\draw[->,rgb color={255,42,0}] (-4.50,2.50) -- (7.00,2.50) node[right]{};
\draw[->,rgb color={255,42,0}] (-4.50,2.50) -- (7.00,6.33);
\draw[->,rgb color={255,42,0}] (-4.50,2.50) -- (7.00,1.73);
\draw[->,rgb color={255,77,0}] (-3.00,-1.33) -- (7.00,-1.33) node[right]{};
\draw[->,rgb color={255,42,0}] (-3.00,-1.00) -- (-10.5,-6.00);
\draw[->,rgb color={255,42,0}] (-3.00,-1.00) -- (-3.00,-6.00);
\draw[->,rgb color={255,42,0}] (-3.00,-1.00) -- (-3.00,-6.00);
\draw[->,rgb color={255,42,0}] (-3.00,-1.00) -- (-3.00,-6.00);
\draw[->,rgb color={255,42,0}] (-3.00,-1.00) -- (7.00,-1.00) node[right]{};
\draw[->,rgb color={255,42,0}] (-3.00,-1.00) -- (7.00,-1.00) node[right]{};
\draw[->,rgb color={255,42,0}] (-3.00,-1.00) -- (7.00,-1.00) node[right]{};
\draw[->,rgb color={255,42,0}] (-3.00,-1.00) -- (7.00,-1.00) node[right]{};
\draw[->,rgb color={255,42,0}] (-3.00,-1.00) -- (7.00,-1.00) node[right]{};
\draw[->,rgb color={255,42,0}] (-3.00,-1.00) -- (7.00,-1.00) node[right]{};
\draw[->,rgb color={255,42,0}] (0.000,-1.00) -- (7.00,-1.00) node[right]{};
\draw[->,rgb color={255,42,0}] (0.000,-1.00) -- (7.00,-1.00) node[right]{};
\draw[->,rgb color={255,42,0}] (-3.00,-1.00) -- (7.00,-1.00) node[right]{};
\draw[->,rgb color={255,42,0}] (-3.00,-1.00) -- (7.00,-4.33);
\draw[->,rgb color={255,42,0}] (-3.00,-1.00) -- (7.00,-2.67);
\draw[->,rgb color={255,42,0}] (-3.00,-1.00) -- (7.00,0.111);
\draw[->,rgb color={255,42,0}] (-3.00,-1.00) -- (7.00,0.111);
\draw[->,rgb color={255,42,0}] (-3.00,-1.00) -- (7.00,0.111);
\draw[->,rgb color={255,42,0}] (-3.00,-1.00) -- (7.00,-0.167);
\draw[->,rgb color={255,42,0}] (-3.00,-1.00) -- (7.00,-0.333);
\draw[->,rgb color={255,42,0}] (-3.00,-1.00) -- (7.00,0.333);
\draw[->,rgb color={255,42,0}] (-3.00,2.00) -- (-10.5,7.00);
\draw[->,rgb color={255,42,0}] (-3.00,2.00) -- (-3.00,7.00);
\draw[->,rgb color={255,42,0}] (-3.00,2.00) -- (-3.00,7.00);
\draw[->,rgb color={255,42,0}] (-3.00,2.00) -- (-3.00,7.00);
\draw[->,rgb color={255,42,0}] (-3.00,2.00) -- (7.00,2.00) node[right]{};
\draw[->,rgb color={255,42,0}] (-3.00,2.00) -- (7.00,2.00) node[right]{};
\draw[->,rgb color={255,42,0}] (-3.00,2.00) -- (7.00,2.00) node[right]{};
\draw[->,rgb color={255,42,0}] (-3.00,2.00) -- (7.00,2.00) node[right]{};
\draw[->,rgb color={255,42,0}] (-3.00,2.00) -- (7.00,2.00) node[right]{};
\draw[->,rgb color={255,42,0}] (-3.00,2.00) -- (7.00,2.00) node[right]{};
\draw[->,rgb color={255,42,0}] (0.000,2.00) -- (7.00,2.00) node[right]{};
\draw[->,rgb color={255,42,0}] (0.000,2.00) -- (7.00,2.00) node[right]{};
\draw[->,rgb color={255,42,0}] (-3.00,2.00) -- (7.00,2.00) node[right]{};
\draw[->,rgb color={255,42,0}] (-3.00,2.00) -- (7.00,5.33);
\draw[->,rgb color={255,42,0}] (-3.00,2.00) -- (7.00,3.67);
\draw[->,rgb color={255,42,0}] (-3.00,2.00) -- (7.00,0.889);
\draw[->,rgb color={255,42,0}] (-3.00,2.00) -- (7.00,0.889);
\draw[->,rgb color={255,42,0}] (-3.00,2.00) -- (7.00,0.889);
\draw[->,rgb color={255,42,0}] (-3.00,2.00) -- (7.00,1.17);
\draw[->,rgb color={255,42,0}] (-3.00,2.00) -- (7.00,0.667);
\draw[->,rgb color={255,42,0}] (-3.00,2.00) -- (7.00,1.33);
\draw[->,rgb color={255,42,0}] (-3.00,2.33) -- (7.00,2.33) node[right]{};
\draw[->,rgb color={255,42,0}] (-3.00,2.33) -- (7.00,2.33) node[right]{};
\draw[->,rgb color={255,77,0}] (0.000,-1.25) -- (7.00,-1.25) node[right]{};
\draw[->,rgb color={255,77,0}] (-2.25,-1.25) -- (7.00,-1.25) node[right]{};
\draw[->,rgb color={255,42,0}] (0.000,2.25) -- (7.00,2.25) node[right]{};
\draw[->,rgb color={255,42,0}] (-2.25,2.25) -- (7.00,2.25) node[right]{};
\draw[->,rgb color={255,0,0}] (-1.50,-0.500) -- (-18.0,-6.00);
\draw[->,rgb color={255,0,0}] (-1.50,-0.500) -- (7.00,2.33);
\draw[->,rgb color={255,0,0}] (-1.50,1.50) -- (-18.0,7.00);
\draw[->,rgb color={255,0,0}] (-1.50,1.50) -- (7.00,-1.33);
\draw[->,rgb color={255,42,0}] (0.000,-1.00) -- (7.00,-3.33);
\draw[->,rgb color={255,42,0}] (0.000,-1.00) -- (7.00,-0.417);
\draw[->,rgb color={255,42,0}] (2.25,-0.750) -- (7.00,-0.750) node[right]{};
\draw[->,rgb color={255,42,0}] (0.000,-0.750) -- (7.00,-0.750) node[right]{};
\draw[->,rgb color={255,42,0}] (0.000,-0.667) -- (7.00,-0.667) node[right]{};
\draw[->,rgb color={255,42,0}] (0.000,-0.500) -- (7.00,-0.500) node[right]{};
\draw[->,rgb color={255,42,0}] (0.000,-0.500) -- (7.00,-0.500) node[right]{};
\draw[->,rgb color={255,42,0}] (1.50,-0.500) -- (7.00,-0.500) node[right]{};
\draw[->,rgb color={255,42,0}] (4.50,-0.500) -- (7.00,-0.500) node[right]{};
\draw[->,rgb color={255,42,0}] (0.000,-0.500) -- (7.00,-0.500) node[right]{};
\draw[->,rgb color={255,42,0}] (0.000,-0.500) -- (7.00,-1.67);
\draw[->,rgb color={255,42,0}] (0.000,-0.500) -- (7.00,0.0833);
\draw[->,rgb color={255,0,0}] (0.000,0.000) -- (7.00,0.000) node[right]{};
\draw[->,rgb color={255,0,0}] (0.000,0.000) -- (7.00,0.000) node[right]{};
\draw[->,rgb color={255,0,0}] (0.000,0.000) -- (7.00,0.000) node[right]{};
\draw[->,rgb color={255,0,0}] (0.000,0.000) -- (7.00,0.000) node[right]{};
\draw[->,rgb color={255,0,0}] (0.000,0.000) -- (7.00,0.000) node[right]{};
\draw[->,rgb color={255,0,0}] (0.000,0.000) -- (7.00,0.000) node[right]{};
\draw[->,rgb color={255,0,0}] (3.00,0.000) -- (7.00,0.000) node[right]{};
\draw[->,rgb color={255,0,0}] (3.00,0.000) -- (7.00,0.000) node[right]{};
\draw[->,rgb color={255,0,0}] (0.000,0.000) -- (7.00,0.000) node[right]{};
\draw[->,rgb color={255,0,0}] (0.000,0.000) -- (7.00,-4.67);
\draw[->,rgb color={255,0,0}] (0.000,0.000) -- (7.00,-2.33);
\draw[->,rgb color={255,0,0}] (0.000,0.000) -- (7.00,-2.33);
\draw[->,rgb color={255,0,0}] (0.000,0.000) -- (7.00,-2.33);
\draw[->,rgb color={255,0,0}] (0.000,0.000) -- (7.00,-1.17);
\draw[->,rgb color={255,0,0}] (0.000,0.000) -- (7.00,1.17);
\draw[->,rgb color={255,0,0}] (0.000,0.000) -- (7.00,1.17);
\draw[->,rgb color={255,0,0}] (0.000,0.000) -- (7.00,1.17);
\draw[->,rgb color={255,0,0}] (0.000,0.000) -- (7.00,-0.778);
\draw[->,rgb color={255,0,0}] (0.000,0.000) -- (7.00,0.778);
\draw[->,rgb color={255,0,0}] (0.000,0.000) -- (7.00,1.56);
\draw[->,rgb color={255,0,0}] (0.000,0.000) -- (7.00,0.583);
\draw[->,rgb color={255,0,0}] (0.000,0.500) -- (0.000,-6.00);
\draw[->,rgb color={255,0,0}] (0.000,0.500) -- (0.000,7.00);
\draw[->,rgb color={255,0,0}] (0.000,1.00) -- (7.00,1.00) node[right]{};
\draw[->,rgb color={255,0,0}] (0.000,1.00) -- (7.00,1.00) node[right]{};
\draw[->,rgb color={255,0,0}] (0.000,1.00) -- (7.00,1.00) node[right]{};
\draw[->,rgb color={255,0,0}] (0.000,1.00) -- (7.00,1.00) node[right]{};
\draw[->,rgb color={255,0,0}] (0.000,1.00) -- (7.00,1.00) node[right]{};
\draw[->,rgb color={255,0,0}] (0.000,1.00) -- (7.00,1.00) node[right]{};
\draw[->,rgb color={255,0,0}] (3.00,1.00) -- (7.00,1.00) node[right]{};
\draw[->,rgb color={255,0,0}] (3.00,1.00) -- (7.00,1.00) node[right]{};
\draw[->,rgb color={255,0,0}] (0.000,1.00) -- (7.00,1.00) node[right]{};
\draw[->,rgb color={255,0,0}] (0.000,1.00) -- (7.00,3.33);
\draw[->,rgb color={255,0,0}] (0.000,1.00) -- (7.00,3.33);
\draw[->,rgb color={255,0,0}] (0.000,1.00) -- (7.00,3.33);
\draw[->,rgb color={255,0,0}] (0.000,1.00) -- (7.00,5.67);
\draw[->,rgb color={255,0,0}] (0.000,1.00) -- (7.00,-0.167);
\draw[->,rgb color={255,0,0}] (0.000,1.00) -- (7.00,-0.167);
\draw[->,rgb color={255,0,0}] (0.000,1.00) -- (7.00,-0.167);
\draw[->,rgb color={255,0,0}] (0.000,1.00) -- (7.00,2.17);
\draw[->,rgb color={255,0,0}] (0.000,1.00) -- (7.00,-0.556);
\draw[->,rgb color={255,0,0}] (0.000,1.00) -- (7.00,0.222);
\draw[->,rgb color={255,0,0}] (0.000,1.00) -- (7.00,1.78);
\draw[->,rgb color={255,0,0}] (0.000,1.00) -- (7.00,0.417);
\draw[->,rgb color={255,0,0}] (0.000,1.50) -- (7.00,1.50) node[right]{};
\draw[->,rgb color={255,0,0}] (0.000,1.50) -- (7.00,1.50) node[right]{};
\draw[->,rgb color={255,0,0}] (1.50,1.50) -- (7.00,1.50) node[right]{};
\draw[->,rgb color={255,0,0}] (4.50,1.50) -- (7.00,1.50) node[right]{};
\draw[->,rgb color={255,0,0}] (0.000,1.50) -- (7.00,1.50) node[right]{};
\draw[->,rgb color={255,0,0}] (0.000,1.50) -- (7.00,2.67);
\draw[->,rgb color={255,0,0}] (0.000,1.50) -- (7.00,0.917);
\draw[->,rgb color={255,0,0}] (0.000,1.67) -- (7.00,1.67) node[right]{};
\draw[->,rgb color={255,0,0}] (2.25,1.75) -- (7.00,1.75) node[right]{};
\draw[->,rgb color={255,0,0}] (0.000,1.75) -- (7.00,1.75) node[right]{};
\draw[->,rgb color={255,42,0}] (0.000,2.00) -- (7.00,4.33);
\draw[->,rgb color={255,42,0}] (0.000,2.00) -- (7.00,1.42);
\draw[->,rgb color={255,42,0}] (1.00,-0.333) -- (7.00,-0.333) node[right]{};
\draw[->,rgb color={255,0,0}] (1.00,1.33) -- (7.00,1.33) node[right]{};
\draw[->,rgb color={255,0,0}] (1.00,1.33) -- (7.00,1.33) node[right]{};
\draw[->,rgb color={255,42,0}] (3.75,-0.250) -- (7.00,-0.250) node[right]{};
\draw[->,rgb color={255,42,0}] (1.50,-0.250) -- (7.00,-0.250) node[right]{};
\draw[->,rgb color={255,0,0}] (1.50,0.500) -- (7.00,0.500) node[right]{};
\draw[->,rgb color={255,0,0}] (1.50,0.500) -- (7.00,0.500) node[right]{};
\draw[->,rgb color={255,0,0}] (3.00,0.500) -- (7.00,0.500) node[right]{};
\draw[->,rgb color={255,0,0}] (6.00,0.500) -- (7.00,0.500) node[right]{};
\draw[->,rgb color={255,0,0}] (1.50,0.500) -- (7.00,0.500) node[right]{};
\draw[->,rgb color={255,0,0}] (1.50,0.500) -- (7.00,-0.111);
\draw[->,rgb color={255,0,0}] (1.50,0.500) -- (7.00,1.11);
\draw[->,rgb color={255,0,0}] (3.75,1.25) -- (7.00,1.25) node[right]{};
\draw[->,rgb color={255,0,0}] (1.50,1.25) -- (7.00,1.25) node[right]{};
\draw[->,rgb color={255,0,0}] (2.00,0.333) -- (7.00,0.333) node[right]{};
\draw[->,rgb color={255,0,0}] (2.00,0.333) -- (7.00,0.333) node[right]{};
\draw[->,rgb color={255,0,0}] (2.00,0.667) -- (7.00,0.667) node[right]{};
\draw[->,rgb color={255,0,0}] (4.50,0.250) -- (7.00,0.250) node[right]{};
\draw[->,rgb color={255,0,0}] (2.25,0.250) -- (7.00,0.250) node[right]{};
\draw[->,rgb color={255,0,0}] (4.50,0.750) -- (7.00,0.750) node[right]{};
\draw[->,rgb color={255,0,0}] (2.25,0.750) -- (7.00,0.750) node[right]{};
\draw[->,rgb color={255,0,0}] (3.00,0.000) -- (7.00,-0.667);
\draw[->,rgb color={255,0,0}] (3.00,0.000) -- (7.00,0.444);
\draw[->,rgb color={255,0,0}] (3.00,1.00) -- (7.00,1.67);
\draw[->,rgb color={255,0,0}] (3.00,1.00) -- (7.00,0.556);
\end{tikzpicture}
}
}
\caption{First steps of the scattering diagram $\tilde{S}(\fD_{\PP^2/E})$. Figure due to Tim Gabele \cite{gabele2019tropical}.}
\label{figure_scattering}
\end{figure}

\newpage


\begin{thebibliography}{10}

\bibitem{MR3257836}
Dan Abramovich and Qile Chen.
\newblock Stable logarithmic maps to {D}eligne-{F}altings pairs {II}.
\newblock {\em Asian J. Math.}, 18(3):465--488, 2014.

\bibitem{abramovich2017decomposition}
Dan Abramovich, Qile Chen, Mark Gross, and Bernd Siebert.
\newblock Decomposition of degenerate {G}romov-{W}itten invariants.
\newblock {\em arXiv preprint arXiv:1709.09864}, 2017.

\bibitem{abramovich2013invariance}
Dan Abramovich and Jonathan Wise.
\newblock Invariance in logarithmic {G}romov-{W}itten theory.
\newblock {\em arXiv preprint arXiv:1306.1222}, 2013.

\bibitem{MR2998828}
Daniele Arcara and Aaron Bertram.
\newblock Bridgeland-stable moduli spaces for {$K$}-trivial surfaces.
\newblock {\em J. Eur. Math. Soc. (JEMS)}, 15(1):1--38, 2013.
\newblock With an appendix by Max Lieblich.

\bibitem{MR2852118}
Arend Bayer and Emanuele Macr\`\i.
\newblock The space of stability conditions on the local projective plane.
\newblock {\em Duke Math. J.}, 160(2):263--322, 2011.

\bibitem{bousseau2019takahashi}
Pierrick Bousseau.
\newblock A proof of {N}.\ {T}akahashi's conjecture for $(\mathbb{P}^2,{E})$
  and a refined sheaves/{G}romov-{W}itten correspondence.
\newblock {\em arXiv preprint arXiv:1909.02992}, 2019.

\bibitem{bousseau2019scattering}
Pierrick Bousseau.
\newblock Scattering diagrams, stability conditions and coherent sheaves on
  {$\PP^2$}.
\newblock {\em arXiv preprint arXiv:1909.02985}, 2019.

\bibitem{bousseau2019hae}
Pierrick Bousseau, Honglu Fan, Shuai Guo, and Longting Wu.
\newblock Holomorphic anomaly equation for $(\mathbb{P}^2,{E})$ and the
  {N}ekrasov-{S}hatashvili limit of local $\mathbb{P}^2$.
\newblock {\em arXiv preprint arXiv:2001.05347}, 2020.

\bibitem{MR2373143}
Tom Bridgeland.
\newblock Stability conditions on triangulated categories.
\newblock {\em Ann. of Math. (2)}, 166(2):317--345, 2007.

\bibitem{MR2376815}
Tom Bridgeland.
\newblock Stability conditions on {$K3$} surfaces.
\newblock {\em Duke Math. J.}, 141(2):241--291, 2008.

\bibitem{MR2813335}
Tom Bridgeland.
\newblock Hall algebras and curve-counting invariants.
\newblock {\em J. Amer. Math. Soc.}, 24(4):969--998, 2011.

\bibitem{cps}
Michael Carl, Max Pumperla, and Bernd Siebert.
\newblock A tropical view of {L}andau-{G}inzburg models.
\newblock {\em
  http://www.math.uni-hamburg.de/home/siebert/preprints/LGtrop.pdf}, 2010.

\bibitem{MR3224717}
Qile Chen.
\newblock Stable logarithmic maps to {D}eligne-{F}altings pairs {I}.
\newblock {\em Ann. of Math. (2)}, 180(2):455--521, 2014.

\bibitem{choi2018local}
Jinwon Choi, Michel van Garrel, Sheldon Katz, and Nobuyoshi Takahashi.
\newblock Local {BPS} invariants: Enumerative aspects and wall-crossing.
\newblock {\em arXiv preprint arXiv:1804.00679}, 2018.

\bibitem{choi2018log}
Jinwon Choi, Michel van Garrel, Sheldon Katz, and Nobuyoshi Takahashi.
\newblock {L}og {BPS} numbers of log {C}alabi-{Y}au surfaces.
\newblock {\em arXiv preprint arXiv:1810.02377}, 2018.

\bibitem{choi2019contributions}
Jinwon Choi, Michel van Garrel, Sheldon Katz, and Nobuyoshi Takahashi.
\newblock Contributions of degenerate stable log maps.
\newblock {\em arXiv preprint arXiv:1908.10906}, 2019.

\bibitem{gabele2019tropical}
Tim Gabele.
\newblock Tropical correspondence for the log {C}alabi-{Y}au pair
  $(\mathbb{P}^2,{E})$.
\newblock {\em preprint}, 2019.

\bibitem{MR2662867}
Mark Gross and Rahul Pandharipande.
\newblock Quivers, curves, and the tropical vertex.
\newblock {\em Port. Math.}, 67(2):211--259, 2010.

\bibitem{MR2667135}
Mark Gross, Rahul Pandharipande, and Bernd Siebert.
\newblock The tropical vertex.
\newblock {\em Duke Math. J.}, 153(2):297--362, 2010.

\bibitem{MR2846484}
Mark Gross and Bernd Siebert.
\newblock From real affine geometry to complex geometry.
\newblock {\em Ann. of Math. (2)}, 174(3):1301--1428, 2011.

\bibitem{MR3011419}
Mark Gross and Bernd Siebert.
\newblock Logarithmic {G}romov-{W}itten invariants.
\newblock {\em J. Amer. Math. Soc.}, 26(2):451--510, 2013.

\bibitem{huybrechts2010geometry}
Daniel Huybrechts and Manfred Lehn.
\newblock {\em The geometry of moduli spaces of sheaves}.
\newblock Springer, 2010.

\bibitem{MR2951762}
Dominic Joyce and Yinan Song.
\newblock A theory of generalized {D}onaldson-{T}homas invariants.
\newblock {\em Mem. Amer. Math. Soc.}, 217(1020):iv+199, 2012.

\bibitem{MR2669707}
A.~Klemm, D.~Maulik, R.~Pandharipande, and E.~Scheidegger.
\newblock Noether-{L}efschetz theory and the {Y}au-{Z}aslow conjecture.
\newblock {\em J. Amer. Math. Soc.}, 23(4):1013--1040, 2010.

\bibitem{MR2250076}
Yukiko Konishi.
\newblock Integrality of {G}opakumar-{V}afa invariants of toric {C}alabi-{Y}au
  threefolds.
\newblock {\em Publ. Res. Inst. Math. Sci.}, 42(2):605--648, 2006.

\bibitem{MR2215440}
Yukiko Konishi.
\newblock Pole structure of topological string free energy.
\newblock {\em Publ. Res. Inst. Math. Sci.}, 42(1):173--219, 2006.

\bibitem{MR2181810}
Maxim Kontsevich and Yan Soibelman.
\newblock Affine structures and non-{A}rchimedean analytic spaces.
\newblock In {\em The unity of mathematics}, volume 244 of {\em Progr. Math.},
  pages 321--385. Birkh\"auser Boston, Boston, MA, 2006.

\bibitem{kontsevich2008stability}
Maxim Kontsevich and Yan Soibelman.
\newblock Stability structures, motivic {D}onaldson-{T}homas invariants and
  cluster transformations.
\newblock {\em arXiv preprint arXiv:0811.2435}, 2008.

\bibitem{MR1263210}
J.~Le~Potier.
\newblock Faisceaux semi-stables de dimension {$1$} sur le plan projectif.
\newblock {\em Rev. Roumaine Math. Pures Appl.}, 38(7-8):635--678, 1993.

\bibitem{MR3936077}
Chunyi Li and Xiaolei Zhao.
\newblock Smoothness and {P}oisson structures of {B}ridgeland moduli spaces on
  {P}oisson surfaces.
\newblock {\em Math. Z.}, 291(1-2):437--447, 2019.

\bibitem{MR1882667}
Jun Li.
\newblock Stable morphisms to singular schemes and relative stable morphisms.
\newblock {\em J. Differential Geom.}, 57(3):509--578, 2001.

\bibitem{MR2264664}
D.~Maulik, N.~Nekrasov, A.~Okounkov, and R.~Pandharipande.
\newblock Gromov-{W}itten theory and {D}onaldson-{T}homas theory. {I}.
\newblock {\em Compos. Math.}, 142(5):1263--1285, 2006.

\bibitem{MR2746343}
Davesh Maulik, Rahul Pandharipande, and Richard~P. Thomas.
\newblock Curves on {$K3$} surfaces and modular forms.
\newblock {\em J. Topol.}, 3(4):937--996, 2010.
\newblock With an appendix by Aaron Pixton.

\bibitem{MR2137980}
Grigory Mikhalkin.
\newblock Enumerative tropical algebraic geometry in {$\mathbb{R}^2$}.
\newblock {\em J. Amer. Math. Soc.}, 18(2):313--377, 2005.

\bibitem{MR2259922}
Takeo Nishinou and Bernd Siebert.
\newblock Toric degenerations of toric varieties and tropical curves.
\newblock {\em Duke Math. J.}, 135(1):1--51, 2006.

\bibitem{MR2545686}
R.~Pandharipande and R.~P. Thomas.
\newblock Curve counting via stable pairs in the derived category.
\newblock {\em Invent. Math.}, 178(2):407--447, 2009.

\bibitem{takahashi9605007curves}
Nobuyoshi Takahashi.
\newblock Curves in the complement of a smooth plane cubic whose normalizations
  are $\mathbb{A}^1$.
\newblock {\em arXiv preprint alg-geom/9605007}, 1996.

\bibitem{MR1844627}
Nobuyoshi Takahashi.
\newblock Log mirror symmetry and local mirror symmetry.
\newblock {\em Comm. Math. Phys.}, 220(2):293--299, 2001.

\bibitem{MR2892766}
Yukinobu Toda.
\newblock Stability conditions and curve counting invariants on {C}alabi-{Y}au
  3-folds.
\newblock {\em Kyoto J. Math.}, 52(1):1--50, 2012.

\bibitem{woolf2013nef}
Matthew Woolf.
\newblock Nef and effective cones on the moduli space of torsion sheaves on the
  projective plane.
\newblock {\em arXiv preprint arXiv:1305.1465}, 2013.

\end{thebibliography}
\end{document}